\def\test#1/{%
	\texttt{\textbackslash #1:}\qquad& A\csname #1\endcsname[\text{sub-script}]{\text{super-script}}Z}
\def\tfst #1/{%
	\texttt{\textbackslash #1:}\qquad& A\csname #1\endcsname[]{}Z}
\newtheorem{theorem}{Theorem}[section]
\newtheorem{lemma}[theorem]{Lemma}
\newtheorem{defi}[theorem]{Definition}
\newtheorem{prop}[theorem]{Proposition}
\newtheorem{coro}[theorem]{Corollary}
\newtheorem{remark}[theorem]{Remark}
\begin{document} 
\title{Closure and Interior Operators of the Category of Positive Topologies}
\author{Joaqu\'in Luna-Torres }
\thanks{Programa de Matem\'aticas, Universidad Distrital Francisco Jos\'e de Caldas,  Bogot\'a D. C., Colombia (retired professor)}

\email{joaquin.luna@sequoia-space.com}
\subjclass{06A15; 06D22; 03F65; 06B23}
\keywords{Formal topology, Locales, Coframes, Convergent Covers, Closure operators, Topological Category, Reflective Subcategory }

\begin{abstract}
We define and study the notions of  closure  $\text{\rsfs{C}}$ operators  and interior $\mathbf{I}$ operators of the category   $\mathbf{CCov}$  of convergent covers which appears in positive topologies. The main motivation of this paper is to  construct the  concrete categories  $\mathbf{\text{\rsfs{C}}\text{-} CCov}$, \ of\ $\mathbf{CCov}\text{-spaces}$, and  $\mathbf{I}\text{-}\mathbf CCov$,  \ of\ $\mathbf{CCov}\text{-spaces}$ and deduce that they are  topological categories. 

\end{abstract}
\maketitle 
\baselineskip=1.7\baselineskip
\section*{0. Introduction}
Closure operators have been used intensively in Algebra (for example, G. Birkhoof, R. Pierce) and topology (for instance, K. Kuratowski, E. $\breve{C}$ech). Category theory provides a variety of notions which expand on the lattice-theoretic concept of closure operator most notably through the notion of reflective subcategory (for example, P. Freyd, J. F. Kennison, H. Herrlich). 
The notions of Grothendieck topology and Lawere-Tierney topology (see \cite{PJ}) provide standard tools in Sheaf-and Topos Theory and are most conveniently described by particular closure operators.
Both lattice-theoretic and categorical views play an important role Theoretic Computer Science.
For a topological space it is well-known that, for example, the associated closure and interior operators provide equivalent descriptions of the topology; but this is not always true in other categories, consequently it makes sense to define and study separately these operators.
The main motivation of this paper is to  construct the  concrete categories  $\mathbf{\text{\rsfs{C}}\text{-} CCov}$, \ of\ $\mathbf{CCov}\text{-spaces}$, and  $\mathbf{I}\text{-}\mathbf CCov$,  \ of\ $\mathbf{CCov}\text{-spaces}$ and deduce that they are  topological categories. 

After the construction of the coframe  $\mathbf{\mathfrak{S}_\text{{\bsifamily{cc}}}(\mathcal S)}$,  of all subobjects\linebreak $\mathcal T=(T^{c}; \lhd_{T^{c}})$ of a convergent cover  $\mathcal S=(S;\lhd)$ (in other words,  $\mathbf{\mathfrak{S}_\text{{\bsifamily{cc}}}(\mathcal S)}^{op}$ is a frame), we introduce, in section 2, the notion of closure operator $\text{\rsfs{C}}$  of the category   $\mathbf{CCov}$ as a version  of  the closure operator studied  by D. Dikranjan and W. Tholen  \cite{DT};  further, we present a  notion of closed subobjects different from the one allowed in (\cite{CS}); and finally, in that section, a reflective subcategory of $\mathbf{\mathfrak{S}_\text{{\bsifamily{cc}}}(\mathcal L)}$ is constructed.
We shall be conserned, in section 3, with a version on the category  $\mathbf CCov$ of the interior operator studied in \cite{LO}, farther we present a  notion of open subobjects at last, in that section, a co reflective subcategory of $\mathbf{\mathfrak{S}_\text{{\bsifamily{cc}}}(\mathcal L)}$ is constructed.

\section{Preliminares}
For a comprehensive account on the the categories of positive topologies we refer to F. Cirualo and G. Sambin \cite{CS}; and T. Coquand, G. Sambin, J. Smith and S. Valentin \cite{CSSV}, from whom we take the following notions:

 Formal Topology is a way to approach Topology by means of intuitionistic and
predicative tools only.
 The original definition given in \cite{SG} is now known to correspond to overt (or open) locales, in the sense that every formal topology is a predicative presentation of an overt locale and the category of formal topologies is (dually) equivalent to the full subcategory of the category of locales whose objects are overt.
 A deep rethinking of the foundations of constructive topology has led
G. Sambin to a two-sided generalization of the notion of a convergent
cover. The structure of a convergent cover can be enriched by means of a second relation, called a positivity relation, which is used to speak about some particular sub-topologies (overt weakly closed sublocales).
This enrichment produces a larger category (positive topologies) in which the category of convergent covers (locales) embeds as a reflective subcategory. The two generalizations can be combined together to obtain an extension of the category of suplattices.

\subsection{Predicative presentations of neighborhoods}

\begin{center}
\begin{minipage}[c]{0.85\textwidth}
A concrete topological space is a triple $\mathcal X \equiv (X, S,\mathbf{N})$ where $X$ is a set of concrete points, $S$ is a set of observables, $\mathbf N$ is a map from $S$ into subsets of $X$, called the {\bf neighborhood map}, which satisfies
\begin{enumerate}
\item[($B_1$)]  $X = \bigcup_{a\in S} \mathbf{N}(a)$
\item[($B_2$)]  $(\forall a, b \in S)\,\ (\forall x\in X)$
$\Big(x\  \text{\tiny{$\mathcal E$}}\ \mathbf{N}(a)\cap \mathbf{N}(b)\,\ \text{ implies} \,\ (\exists c\in S)\linebreak \big(x\ \text{\tiny{$\mathcal E$}}\ \mathbf{N}(c)\ \&\  \mathbf{N}(c)\subseteq \mathbf{N}(a)\cap \mathbf{N}(b)\big)\Big)$.
\end{enumerate}
\end{minipage}
\end{center}
Note that this description re-establishes a balance between the side of points: the concrete side, and the side of observables, or formal basic neighbourhoods, which is called the formal side.
Note that $(B_2)$ is just a rigorous writing of the usual condition stating that
if $\mathbf{N}(a)$ and $\mathbf{N}(b)$ are two neighbourhoods of $x$ then there exists a neighborhood $\mathbf{N}(c)$ of $x$ which is contained both in $\mathbf{N}(a)$ and $\mathbf{N}(b)$ and this is all what we need to obtain closure under intersection.

Now, a map $\mathbf{N} : S \rightarrow \mathcal P(X)$ is a propositional function with two arguments  $\mathbf N(x)(a)\ \mathbf{prop}\ [x : X, a : S]$, that is a binary relation, written in a more suggestive way as 
\[
x  \Vdash \mathbf{prop}\ [x : X, a : S]
\]
 and read it “x lies in a” or “x forces a”.

It is convenient to use also a few abbreviations:
\begin{align*}
 x\Vdash U & \equiv  (\exists b\ \text{\tiny{$\mathcal E$}}\ U)( x \Vdash b)\\
 ext(a)& \equiv \{ x: X\mid x\Vdash a\}\\
 ext(U) & \equiv \bigcup_{a\  \text{\tiny{$\mathcal E$}}\ U} ext(a)
 \end{align*}
 Hence $x\Vdash a$ is the same as $x\ \text{\tiny{$\mathcal E$}}\ ext(a)$ and $x\Vdash U$ is the same as $x\ \text{\tiny{$\mathcal E$}}\ ext(U)$; thus
the map $\mathbf N$ coincides with ext.
 
Then $(B_1)$ and $(B_2)$ can be rewritten as
\begin{enumerate}
\item[($B_1$)] $ (\forall x \in X)(\exists a \in S)\,\ x \Vdash a$
\item[($B_2$)] $(\forall a, b \in S)\,\  ext(a) \cap ext(b) \subseteq ext(a{\downarrow}b)$
 \end{enumerate}
where $ a{\downarrow}b \equiv \{c : S\mid ext(c) \subseteq ext(a)\ \& \ ext(c)\subseteq ext(b)\}$

\subsection{Basic cover}
In  \cite{BS}, the authors show that Sup-lattices can be characterized as pairs ($L;\bigvee)$ where $\bigvee$ is an infinitary operation on $L$ satisfying 
\begin{enumerate}
\item[(i)] $\bigvee\{x\}=x$ for every $x\in L$;
\item[(ii)] $\bigvee_{i\in I}(\bigvee U_{i}) = \bigvee (\bigcup_{i\in I} U_{i})$ for every family $(U_i)_{i\in I}$ of subsets of $L$.
\end{enumerate}
Now, they define $x\leq y$ putting $\bigvee\{x, y\} =y$.
This partial order induce a relation $\prec$ between subsets, where the intended meaning of $U\prec  W$ is that $\bigvee U\leq \bigvee W$. Recalling that $a =\bigvee\{a\}$, the characterizing property of joins can be written in terms of $\prec$ as
$(\forall a \in U)(\{a\} \prec V)$ iff $U\prec V$. For  any  preorder $\prec$, they  define a relation between elements and subsets by putting
$$ a \lhd  U \equiv \{a\}\prec U.$$

In general, the opposite of a set-based suplattice need not be set-based. In the set-based case all the information about the suplattice under consideration can be coded by means of a cover relation on the base:
\begin{center}
\begin{minipage}[c]{0.85\textwidth}
 Let $S$ be a set. A small relation $\lhd$ between elements and subsets of $S$ is called a (basic) {\bf cover} if
\begin{enumerate}
\item  $a\  \text{\tiny{$\mathcal E$}}\ U \Rightarrow a \lhd U$
\item $ a \lhd U\ \&\ (\forall u \ \text{\tiny{$\mathcal E$}}\ U)(u \lhd V ) \Rightarrow a \lhd V$
for every $a\in S$ and $U; V \subseteq S$.
\end{enumerate}
\end{minipage}
\end{center}
The motivating example is given by a set-based suplattice with base $S$, where
$a\lhd U$ is taken to mean $a\leqslant \bigvee U$.

 A base for the suplattice (least upper bound lattice) $(L;\leq)$ is a set $S \subseteq L$ such that
$\bigvee \{a\in S\mid a\leq p\}= p$  for all $p$ in $L$. This is called  a set-based suplattice. 

A basic cover $(S;\lhd)$ has to be understood as a presentation of a set-based suplattice, where $S$ plays the role of a set of codes for the base. Indeed, any cover $(S;\lhd)$ can be extended to a preorder $U \lhd V$ on $\mathcal P(S)$ defined by $(\forall u \ \text{\tiny{$\mathcal E$}}\ U)(u \lhd V )$. This induces an equivalence relation $=_{\lhd}$ on $\mathcal P(S)$  where
$ U =_{\lhd} V$ is $U \lhd V\  \& \ V\lhd U$. The quotient collection $\mathcal P(S)_{/=_{\lhd}}$ is a suplattice with $\bigvee_{i} [U_{i}] = [\bigcup_{i} U_{i}]$ (and $[U] \leq [V]$
if and only if $U\lhd  V$). Such a suplattice has a base, namely the set $\{ [a]\mid a\in S\}$. (Here we have adopted a convention we are going to use quite often: for readability's sake, we denote a singleton by its unique element.)

To complete the  illustration, one should note that: 
\begin{enumerate}
\item[($i$)] the cover induced by a set-based suplattice L presents a suplattice which is isomorphic to L, the isomorphism being given by the two mappings $x\mapsto \{a\in S \mid a\leq x\}$ and $[U]\mapsto \bigvee U$;
\item[($ii$)] the cover associated to the suplattice presented by a cover $(S;\lhd)$ is isomorphic to $(S;\lhd)$ itself, according to the definition of morphism given below. Note that each set-based suplattice can be presented by several covers; all of them are going to be isomorphic to each other, according to the notion of morphism we are going to introduce below.
\end{enumerate}

\subsection{Morphisms between basic covers}
 Let $\mathcal S_1 = (S_1;\lhd_1)$ and $\mathcal S_2 = (S_2;\lhd_2)$ be two basic covers. A small relation\,\,\ $\text{\Large \bsifamily{s}} \subseteq S_1 \times S_2$ {\bf respects the covers} if
\[
U \lhd_2 V \Rightarrow \text{\Large \bsifamily{s}}^{-}U \lhd_1 \text{\Large \bsifamily{s}}^{-}V\,\ \text{for all}\,\ U; V \subseteq S_2
\]
where $\text{\Large \bsifamily{s}}^{-}W = \{ a \in S_1 \mid (\exists w\in W)(a\ \text{\Large \bsifamily{s}}\ w)\}$.
A morphism between $\mathcal S_1$ and $\mathcal S_2$ is an equivalence class of relations between $S_1$ and $S_2$ which respect the covers, where two relations $\text{\Large \bsifamily{s}}$ and $\text{\Large \bsifamily{s'}}$ are equivalent if $\text{\Large \bsifamily{s}}^{-}W =_{\lhd_1} \text{\Large \bsifamily{s'}}^{\ -}W$ for every $W \subseteq S_2$.

The previous  definition has a very natural meaning: a morphism between two covers is just a presentation of a suplattice homomorphism between the corresponding suplattices.

Basic covers and their morphisms form a category, called {\bf BCov}, which is
dual to the category {\bf SL} of suplattices, impredicatively. The
previous discussion says that
\[
\mathbf{BCov}\big((S_1;\lhd_1); (S_2;\lhd_2)\big)= \mathbf{SL}\big(\mathcal P(S_2)_{/=_{\lhd_2}};\mathcal P(S_1)_{/=_{\lhd_1}} \big).
\]
\subsection{Convergent basic cover}
A basic cover is {\bf convergent} if its corresponding suplattice is a frame.
A morphism between convergent covers is a morphism of basic covers whose corresponding suplattice homomorphism is, in fact, a frame homomorphism (preserves finite meets). The resulting category will be called $\mathbf{CCov}$. Impredicatively, $\mathbf{CCov}$ is dual to the category Frm of frames and hence equivalent to the category $\mathbf Loc$ of locales. 

An explicit description of convergent covers and their morphisms is the following:
\begin{center}
\begin{minipage}[c]{0.85\textwidth}
  {\it A basic cover $(S;\lhd)$ is convergent if and only if
\begin{itemize}
\item[$\star$] $a \lhd U \& a \lhd  V \Rightarrow a \lhd U{\downarrow}V$ for every $a\in S$ and $U; V \subseteq S$.
\end{itemize}
where $U{\downarrow}V = \{ b \in S\mid b \lhd u\ \& \ b \lhd v\ \text{for some}\ (u; v) \in U \times  V\}$. In this case,
$[U] \wedge [W] = [U{\downarrow}W]$.}
\end{minipage}
\end{center}

\begin{center}
\begin{minipage}[c]{0.85\textwidth}
A morphism $\text{\Large \bsifamily{s}} : (S_1;\lhd_1) \rightarrow (S_2;\lhd_2)$ between convergent covers is a morphism of basic covers such that
\begin{itemize}
\item $ S_1 \lhd_1 \text{\Large \bsifamily{s}}^{-}S_2$ and
\item $(\text{\Large \bsifamily{s}}^{-}U){\downarrow}_1(\text{\Large \bsifamily{s}}^{-}V) \lhd_1 \text{\Large \bsifamily{s}}^{-}(U{\downarrow}_2 V)$ for every $U; V \subseteq S_2$.
\end{itemize}
\end{minipage}
\end{center}
\subsection{Basic and positive topology}
It is convenient to use the symbol $\between$ for inhabited intersection, that is,
$U\between V \xLongleftrightarrow {def} (\exists a\in S)(a\in U\ \&\ a\in V)$  for $U; V\subseteq S$.

An element $x$ of a locale $L$ is {\bf positive} if ($x \leq \bigvee Y) \Rightarrow (Y \between L)$ for every $Y\subseteq L$. With classical logic, $x$ is positive if and only if $x\ne 0$. In the language of formal topology this notion is translated as follows, which requires some impredicativity: 
\begin{center}
\begin{minipage}[c]{0.85\textwidth}
 Given a (convergent) cover $(S;\lhd)$,  $a\in S$ is said to be positive if $(a \lhd U)\rightarrow (U \between S)$ for every $U \subseteq S$.  {\bf Pos} is the subset of positive elements of $S$. A subset $U \subseteq S$ is said to be positive if $U \between {\mathbf{Pos}}$.

A convergent cover $(S;\lhd)$ is {\bf overt} if $a \lhd \{a\} \cap {\mathbf{Pos}}$ for every $a\in S$.
\end{minipage}
\end{center}
$(S;\lhd)$ is overt if and only if $[U] = [U \cap {\mathbf{Pos}}]$ for every $U \subseteq S$.

Overt locales are usually defined in an equivalent way, as follows:
The category of locales has a terminal object which, as a frame, is the power
$\mathcal P(1)$ of the singleton $1 = \{0\}$. This corresponds to the convergent cover $(1;\in $). It can be thought of  the elements of $\mathcal P(1)$ as propositions modulo logical equivalence (that is, truth values).

For each convergent cover $(S;\lhd)$ there exists a unique (up to equivalence)
morphism $\text{\Large \bsifamily{s}} :(S;\lhd) \rightarrow (1;\in)$ between convergent covers (put $\text{\Large \bsifamily{s}}^{-}0 = S$). As a frame homomorphism $\mathcal P(1)\rightarrow \mathcal P(S)_{/=_{\lhd}}$ it maps a proposition $p$ to the equivalence class $[\{a \in S \mid p\}]$.

A basic cover $(S;\lhd)$ equipped with a compatible positivity relation  is called a {\bf basic topology}. A convergent cover equipped with a compatible positivity relation is called a {\bf positive topology}.

\section{Closure Operators}

In this section we shall be conserned with a version on the category  $\mathbf{CCov}$ of  the closure operator studied  by D. Dikranjan and W. Tholen  \cite{DT}.

It is important to remember that impredicatively, $\mathbf{CCov}$ is dual to the category $\mathbf{Frm}$ of frames and hence equivalent to the category $\mathbf{Loc}$ of locales. 

Our first aim in this section is to describe  a special class of subobjects of a set $S$ in such a way that they will be sublocales of $\mathcal P(S)$, and with them  we shall get a coframe. 
\begin{lemma}
Let $S$ be a  set and let $T$ be a subset of $S$. then 
\[
\mathcal P_{*}(T) = \{V\cup T^{c}\mid V\in \mathcal P(T)\}\,\ \text{ is a sublocale of} \,\ \mathcal P(S). 
\]

\end{lemma}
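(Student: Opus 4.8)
The plan is to identify $\mathcal P_*(T)$ with a principal filter of the frame $\mathcal P(S)$ and then exhibit the nucleus whose fixpoints are exactly this filter. First I would rewrite the defining set: since every $V$ in $\mathcal P(T)$ satisfies $V\subseteq T$, the union $V\cup T^c$ is exactly a subset of $S$ containing $T^c$, and conversely any $W\subseteq S$ with $T^c\subseteq W$ can be written as $(W\cap T)\cup T^c$ with $W\cap T\in\mathcal P(T)$. Hence
\[
\mathcal P_*(T)=\{\,W\subseteq S\mid T^c\subseteq W\,\}=\ \uparrow\! T^c,
\]
the principal up-set generated by $T^c$ in $(\mathcal P(S),\subseteq)$.

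Recall that a subset of the frame $\mathcal P(S)$ is a sublocale precisely when it is the set of fixpoints of a nucleus, equivalently when it is closed under arbitrary intersections and under the Heyting implication $W\Rightarrow(-)=W^c\cup(-)$ by every $W\in\mathcal P(S)$. I would define the map
\[
j:\mathcal P(S)\to\mathcal P(S),\qquad j(W)=W\cup T^c,
\]
and verify that it is a nucleus: it is clearly monotone and inflationary, it is idempotent because $T^c\cup(T^c\cup W)=T^c\cup W$, and it preserves the top element since $j(S)=S$.

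The one computation carrying real content is that $j$ preserves binary meets, i.e. $j(W_1\cap W_2)=j(W_1)\cap j(W_2)$; this is exactly the distributivity of union over intersection, $(W_1\cup T^c)\cap(W_2\cup T^c)=(W_1\cap W_2)\cup T^c$, valid in the Boolean frame $\mathcal P(S)$. Once this is checked, the fixpoint set of $j$ is $\{\,W\mid W\cup T^c=W\,\}=\ \uparrow\! T^c=\mathcal P_*(T)$, so $\mathcal P_*(T)$ is the sublocale induced by the nucleus $j$, which completes the proof. I do not expect a genuine obstacle: the only points requiring care are the bookkeeping behind the identity $\mathcal P_*(T)=\ \uparrow\! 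T^c$ and the choice of the correct nucleus. Alternatively one may bypass nuclei entirely and verify the two closure conditions directly on $\uparrow\! T^c$, noting that an arbitrary intersection of supersets of $T^c$ again contains $T^c$, and that $W\Rightarrow a=W^c\cup a\supseteq a\supseteq T^c$ whenever $a\in\mathcal P_*(T)$.
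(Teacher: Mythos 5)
Your proposal is correct, but your primary route is genuinely different from the paper's. The paper proves the lemma by direct verification: it observes that $\mathcal P_{*}(T)$ is a complete lattice satisfying the frame distributive law, and that it is closed under Heyting implication by arbitrary $U\in\mathcal P(S)$, via the computation $U\rightarrow (V\cup T^{c})=U^{c}\cup V\cup T^{c}=\bigl((U^{c}\cap T)\cup V\bigr)\cup T^{c}\in\mathcal P_{*}(T)$. You instead identify $\mathcal P_{*}(T)$ with the principal up-set $\uparrow\! T^{c}$ and realize it as the fixpoint set of the closed nucleus $j(W)=W\cup T^{c}$, then invoke the correspondence between nuclei and sublocales. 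Both arguments ultimately rest on the same two Boolean facts (distributivity of union over intersection, and the fact that $W\Rightarrow a$ contains $a$, hence contains $T^{c}$), but they package them differently: your nucleus argument is more structural, since it exhibits $\mathcal P_{*}(T)$ as precisely the closed sublocale of $\mathcal P(S)$ determined by the element $T^{c}$ (i.e.\ the sublocale corresponding to the subspace $T$), which explains conceptually why the lemma holds and why $T^{c}$ is the bottom element of $\mathcal P_{*}(T)$, a fact the paper records immediately after the lemma; the cost is an appeal to the fixpoint characterization of sublocales, which the paper's more elementary, self-contained check avoids. Note also that your closing alternative (checking closure of $\uparrow\! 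T^{c}$ under arbitrary intersections and under $W\Rightarrow a=W^{c}\cup a$) is essentially the paper's own proof, streamlined by the identification $\mathcal P_{*}(T)=\uparrow\! T^{c}$.
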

( Here $T^{c}$ is the complement of $T$ in $S$)
\begin{proof}
Clearly $ \mathcal P_{*}(T)$ is s a complete lattice  satisfying the distributivity law of arbitary joins and finite meets, and  for every $V\cup T^{c}\in \mathcal P_ {*}(T)$ and every $U\in \mathcal P(S)$,\,\ $(U^{c}\cap T^{c})\cup V\in \mathcal P(T)$ and $U^{c}\cap T^{c}\subseteq T^{c}$, therefore $U\rightarrow (V\cup T^{c})$ is an element of $ \mathcal P_{*}(T)$\ (here \ ``$\rightarrow$''\ denote the Heyting implication).
\end{proof}

It is clear that $\bigwedge \mathcal P_{*}(T) = T^{c}$, therefore it is important to use $T^{c}$ in the definition of subobject
\footnote{ Since $U\cup \mathfrak V^{c} \in \mathcal P_{*}(\mathfrak V)$ for every $U\in \mathcal P(S)$,\,\ $(\mathfrak V, \lhd_{\mathfrak U })$ is a convergent coverage for each $\mathfrak V\subseteq S$, provided that $(S;\lhd)$ is convergent}.

\begin{defi}
A subobject $\mathcal T=(T; \lhd_{T})$ of a convergentc cover   $\mathcal S=(S;\lhd)$  consists of
\begin{enumerate}
\item The complement $T^{c}$ of a subset $T$ of $S$;
\item a convergent cover $\lhd_{T^{c}} \subseteq T\times \mathcal P_{*}(T)$ obtained as follows: if $a\in T^{c}$ and $a \lhd U$ for $U\subseteq S$,then $a\lhd_{T^{c}} (U\cup T^{c})$\footnote{ Since  $U\cup T^{c} \in \mathcal P_{*}(T)$ for every $U\in \mathcal P(S)$,\,\  $(T, \lhd_{T})$ is a convergent cover for every $T\subseteq S$, whenever $(S;\lhd)$ is convergent.} .
\end{enumerate}
\end{defi}
Note that since $U\cup T^{c}= (U\cap T)\cup(U\cap T^{c})\subseteq (U\cap T)\cup U^{c}$,  then $(U\cup T^{c}) \in  \mathcal P_{*}(T)$.

From now on, we shall denote by $\mathbf{\mathfrak{S}_\text{{\bsifamily{cc}}}(\mathcal S)}$ the coframe  of all subobjects\linebreak $\mathcal T=(T^{c}; \lhd_{T^{c}})$ of  $\mathcal S=(S;\lhd)$(i.e.  $\mathbf{\mathfrak{S}_\text{{\bsifamily{cc}}}(\mathcal S)}^{op}$ is a frame).
 \begin{defi}
 A closure operator $\text{\rsfs{C}}$  of the category   $\mathbf{CCov}$ is given by a family $\text{\rsfs{C}} =(c_{\text{\tiny{$\mathcal S$}}})_{\text{$ \mathcal S\in \mathbf{CCov}$}}$ of maps $c_{\text{\tiny{$\mathcal S$}}}:\mathfrak{S}_\text{{\bsifamily{cc}}}(S)\rightarrow \mathfrak{\mathcal S}_\text{{\bsifamily{cc}}}(\mathcal S)$ such that
 \begin{itemize}
 \item[($C_1$)] $\left(\text{Extension}\right)$\,\ $ \mathcal T\subseteq c_{\text{\tiny{$\mathcal S$}}}(\mathcal T) $ for all $\mathcal T \in \mathcal{S}_\text{{\bsifamily{cc}}}(S)$;
 \item[($C_2$)] $\left(\text{Monotonicity}\right)$\,\  If $\mathcal U\subseteq \mathcal T$ in $\mathcal{S}_\text{{\bsifamily{cc}}}(S)$, then $c_{\text{\tiny{$\mathcal S$}}}(\mathcal U)\subseteq c_{\text{\tiny{$\mathcal S$}}}(\mathcal T)$
 \item[($C_3$)] $\left(\text{Lower bound}\right)$\,\  $c_{\text{\tiny{$\mathcal S$}}}(\Phi)=\Phi$, where $\Phi =(\emptyset, \lhd_{\emptyset})$.
 \end{itemize}
 \end{defi}

 \begin{defi}
 An $\text{\rsfs{C}}$-space is a pair $(\mathcal S, c_{\text{\tiny{$\mathcal S$}}})$ where $\mathcal S$ is an object of  $\mathbf{CCov}$   and $ c_{\text{\tiny{$\mathcal S$}}}$ is a closure map on $\mathcal S$.
 \end{defi}

 \begin{remark}
 If $\text{\Large \bsifamily{s}} \subseteq S_1 \times S_2$ is a representative of   an equivalence class of relations between $\mathcal S_1=(S_1,\lhd_1)$ and $\mathcal S_2=(S_2,\lhd_2)$ which respect the covers, we donte by
 \begin{itemize}
\item  $\text{\Large \bsifamily{s}}^{-1}$ the inverse of $\text{\Large \bsifamily{s}}$;
\item $\text{\Large \bsifamily{s}}^{\rightarrow}$ the direct image of $\text{\Large \bsifamily{s}}$ defined by $$(\forall X\subset S_{1})( \text{\Large \bsifamily{s}}^{\rightarrow}(X)=\big\{t\in S_{2}\mid \big (\exists s\in X\big)\big( (s,t)\in \text{\Large \bsifamily{s}}\big)\big\}; $$
\item $\text{\Large \bsifamily{s}}^{\leftarrow}$ the inverse  image of $\text{\Large \bsifamily{s}}$ is the direct image of $\text{\Large \bsifamily{s}}^{-1}$.
 \end{itemize}
 \end{remark}
 
 \begin{defi}
 A morphism $\text{\Large \bsifamily{s}}:\mathcal L\rightarrow \mathcal S $ of $\mathbf  CCov$ is said to be $\text{\rsfs{C}}$-continuous if

 \begin{equation}\label{c-conti}
 \text{\Large \bsifamily{s}}^{\rightarrow}\big( c_{\text{\tiny{$\mathcal L$}}}(\mathcal T)\big) \subseteq c_{\text{\tiny{$\mathcal S$}}}\big(  \text{\Large \bsifamily{s}}^{\rightarrow}(\mathcal T)\big)
 \end{equation}
 for all $\mathcal T \in \mathfrak{S}_\text{{\bsifamily{cc}}}(\mathcal L)$.
 \end{defi}
  
 Note that, in the presence of requirement ($C_2$), the continuity condition can equivalently be expresssed as
 \begin{equation}\label{equi-conti}
 c_{\text{\tiny{$\mathcal L$}}}(\text{\Large \bsifamily{s}}^{\leftarrow}(\mathcal U)) \subseteq \text{\Large \bsifamily{s}}^{\leftarrow}\big[c_{\text{\tiny{$\mathcal S$}}}(\mathcal U)\big]
 \end{equation}
 for all $\mathcal U \in \mathfrak{S}_\text{{\bsifamily{cc}}}(\mathcal  S)$.
 Indeed, from (\ref{c-conti}), we have 
 \[
  \text{\Large \bsifamily{s}}^{\rightarrow}\big(c_{\text{\tiny{$\mathcal L$}}}( \text{\Large \bsifamily{s}}^{\leftarrow}(\mathcal U)\big) \subseteq c_{\text{\tiny{$\mathcal S$}}}\left( \text{\Large \bsifamily{s}}^{\rightarrow}( \text{\Large \bsifamily{s}}^{\leftarrow}(\mathcal U)\right)\subseteq c_{\text{\tiny{$S$}}}(\mathcal U).
\]  

 consequently, $ c_{\text{\tiny{$\mathcal L$}}}(\text{\Large \bsifamily{s}}^{\leftarrow}(\mathcal U)) \subseteq \text{\Large \bsifamily{s}}^{\leftarrow}\big[c_{\text{\tiny{$\mathcal S$}}}(\mathcal U)\big]$.
 \begin{prop}
 Let $\text{\Large\bsifamily{s}}:\mathcal L\rightarrow \mathcal M$ and $\text{\Large\bsifamily{t}}:\mathcal M\rightarrow\mathcal N$ be two $\text{\rsfs{C}}$-continuous morphisms  of $\mathbf  CCov$ then \ $\text{\Large\bsifamily{t $\centerdot$ s}}$ is an $\text{\rsfs{C}}$-continuous morphism of  $\mathbf{CCov}$.
 \end{prop}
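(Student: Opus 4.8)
The plan is to verify directly that the composite $\text{\Large\bsifamily{t $\centerdot$ s}}$ satisfies the defining inequality (\ref{c-conti}) of $\text{\rsfs{C}}$-continuity. First I would fix an arbitrary subobject $\mathcal T\in\mathfrak{S}_\text{{\bsifamily{cc}}}(\mathcal L)$ and reduce the problem to a chain of inclusions connecting $(\text{\Large\bsifamily{t $\centerdot$ s}})^{\rightarrow}(c_{\text{\tiny{$\mathcal L$}}}(\mathcal T))$ to $c_{\text{\tiny{$\mathcal N$}}}\big((\text{\Large\bsifamily{t $\centerdot$ s}})^{\rightarrow}(\mathcal T)\big)$. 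The whole argument rests on two structural facts about direct images: functoriality, i.e.\ $(\text{\Large\bsifamily{t $\centerdot$ s}})^{\rightarrow}=\text{\Large\bsifamily{t}}^{\rightarrow}\circ\text{\Large\bsifamily{s}}^{\rightarrow}$, and monotonicity, i.e.\ $A\subseteq B$ implies $\text{\Large\bsifamily{t}}^{\rightarrow}(A)\subseteq\text{\Large\bsifamily{t}}^{\rightarrow}(B)$. Both are immediate from the definition of $\text{\Large\bsifamily{s}}^{\rightarrow}$ recorded in the Remark, since the relational composite of $\text{\Large\bsifamily{s}}\subseteq S_1\times S_2$ and $\text{\Large\bsifamily{t}}\subseteq S_2\times S_3$ sends each $X$ to exactly $\text{\Large\bsifamily{t}}^{\rightarrow}(\text{\Large\bsifamily{s}}^{\rightarrow}(X))$.

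Granting these, the core computation is the chain
\begin{align*}
(\text{\Large\bsifamily{t $\centerdot$ s}})^{\rightarrow}\big(c_{\text{\tiny{$\mathcal L$}}}(\mathcal T)\big)
&=\text{\Large\bsifamily{t}}^{\rightarrow}\Big(\text{\Large\bsifamily{s}}^{\rightarrow}\big(c_{\text{\tiny{$\mathcal L$}}}(\mathcal T)\big)\Big)
\subseteq \text{\Large\bsifamily{t}}^{\rightarrow}\Big(c_{\text{\tiny{$\mathcal M$}}}\big(\text{\Large\bsifamily{s}}^{\rightarrow}(\mathcal T)\big)\Big)\\
&\subseteq c_{\text{\tiny{$\mathcal N$}}}\Big(\text{\Large\bsifamily{t}}^{\rightarrow}\big(\text{\Large\bsifamily{s}}^{\rightarrow}(\mathcal T)\big)\Big)
=c_{\text{\tiny{$\mathcal N$}}}\big((\text{\Large\bsifamily{t $\centerdot$ s}})^{\rightarrow}(\mathcal T)\big),
\end{align*}
where the first inclusion applies the $\text{\rsfs{C}}$-continuity of $\text{\Large\bsifamily{s}}$ followed by monotonicity of $\text{\Large\bsifamily{t}}^{\rightarrow}$, and the second applies the $\text{\rsfs{C}}$-continuity of $\text{\Large\bsifamily{t}}$ to the subobject $\text{\Large\bsifamily{s}}^{\rightarrow}(\mathcal T)\in\mathfrak{S}_\text{{\bsifamily{cc}}}(\mathcal M)$. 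Since $\mathcal T$ was arbitrary, this is precisely the condition (\ref{c-conti}) for $\text{\Large\bsifamily{t $\centerdot$ s}}$, so the composite is $\text{\rsfs{C}}$-continuous.

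The one point requiring genuine care is that morphisms of $\mathbf{CCov}$ are equivalence classes of cover-respecting relations rather than honest functions, so I must check that the identity $(\text{\Large\bsifamily{t $\centerdot$ s}})^{\rightarrow}=\text{\Large\bsifamily{t}}^{\rightarrow}\circ\text{\Large\bsifamily{s}}^{\rightarrow}$ is well defined on the quotient and that $\text{\Large\bsifamily{s}}^{\rightarrow}$ indeed carries $\mathfrak{S}_\text{{\bsifamily{cc}}}(\mathcal L)$ into $\mathfrak{S}_\text{{\bsifamily{cc}}}(\mathcal M)$, so that every term in the chain is a legitimate element of the coframe on which $c_{\text{\tiny{$\mathcal M$}}}$ and $c_{\text{\tiny{$\mathcal N$}}}$ act. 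Once representatives are fixed, functoriality is the routine observation that the direct image of a relational composite is the composite of the direct images, and compatibility with $=_{\lhd}$ follows from the cover-respecting condition. Alternatively, the same conclusion can be reached from the equivalent inverse-image form (\ref{equi-conti}) using $(\text{\Large\bsifamily{t $\centerdot$ s}})^{\leftarrow}=\text{\Large\bsifamily{s}}^{\leftarrow}\circ\text{\Large\bsifamily{t}}^{\leftarrow}$, which some readers may find cleaner; I would present whichever matches the conventions already fixed for subobjects.
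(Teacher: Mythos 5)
Your proposal is correct and follows essentially the same route as the paper's own proof: apply the $\text{\rsfs{C}}$-continuity of $\text{\Large\bsifamily{s}}$, push through with monotonicity of $\text{\Large\bsifamily{t}}^{\rightarrow}$, then apply the $\text{\rsfs{C}}$-continuity of $\text{\Large\bsifamily{t}}$, and identify $(\text{\Large\bsifamily{t $\centerdot$ s}})^{\rightarrow}$ with $\text{\Large\bsifamily{t}}^{\rightarrow}\circ\text{\Large\bsifamily{s}}^{\rightarrow}$. Your added attention to well-definedness on equivalence classes of relations and to $\text{\Large\bsifamily{s}}^{\rightarrow}$ landing in $\mathfrak{S}_\text{{\bsifamily{cc}}}(\mathcal M)$ is a point the paper passes over silently, but it does not change the argument.
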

 \begin{proof}
 Since $\text{\Large\bsifamily{s}}:\mathcal L\rightarrow \mathcal M$ is $\text{\rsfs{C}}$-continuous, we have  $$\text{\Large\bsifamily{s}}^{\rightarrow}[c_{\text{\tiny{$\mathcal L$}}}(\mathcal T)] \subseteq c_{\text{\tiny{$\mathcal M$}}}\left( \text{\Large\bsifamily{s}}^{\rightarrow}(\mathcal T\right)$$
  for all $\mathcal T\in \mathfrak{S}_\text{{\bsifamily{cc}}}(\mathcal S)$, it fallows that
  $$\text{\Large\bsifamily{t}}^{\rightarrow}\big[\text{\Large\bsifamily{s}}^{\rightarrow}[c_{\text{\tiny{$\mathcal L$}}}(\mathcal T)]\big] \subseteq \text{\Large\bsifamily{t}}^{\rightarrow}\big[c_{\text{\tiny{$\mathcal M$}}}\left(\text{\Large\bsifamily{s}}^{\rightarrow}(\mathcal T\right)\big]$$
  now,  by the $\text{\rsfs{C}}$-continuity of $\text{\Large\bsifamily{t}}$,
 $$ \text{\Large\bsifamily{t}}^{\rightarrow}\big[c_{\text{\tiny{$\mathcal M$}}}\left( \text{\Large\bsifamily{s}}^{\rightarrow}(\mathcal T)\right)\big] \subseteq c_{\text{\tiny{$\mathcal N$}}}\left( \text{\Large\bsifamily{t}}^{\rightarrow}\big[\text{\Large\bsifamily{s}}^{\rightarrow}(\mathcal T)\big]\right)$$ 
 
therefore $$\text{\Large\bsifamily{t}}^{\rightarrow}\big[\text{\Large\bsifamily{s}}^{\rightarrow}[c_{\text{\tiny{$\mathcal L$}}}(\mathcal T)\big] \subseteq c_{\text{\tiny{$\mathcal N$}}}\left( \text{\Large\bsifamily{t}}^{\rightarrow}\big[\text{\Large\bsifamily{s}}^{\rightarrow}(\mathcal T)\big]\right),$$
  that is to say 
  $$(\text{\Large\bsifamily{t $\centerdot$ s}})^{\rightarrow}\big[ c_{\text{\tiny{$\mathcal L$}}}(\mathbf T)\Big]\subseteq c_{\text{\tiny{$\mathcal N$}}}\Big( (\text{\Large\bsifamily{t $\centerdot$s}})^{\rightarrow} (\mathbf T)\Big)$$
\end{proof}
  As a consequence we obtain

 \begin{defi}
 The category\ $\mathbf{\text{\rsfs{C}}\text{-} CCov}$\ of\ $\mathbf{CCov}-\text{spaces}$ comprises the following data:
 \begin{enumerate}
 \item {\bf Objects}: Pairs $(\mathcal S,c_{\text{\tiny{$S$}}})$ where $\mathcal S=(S,\lhd)$ is an object of $\mathbf{CCov}$ and $c_{\text{\tiny{$\mathcal S$}}}$ is a closure map on $\mathcal S$.
\item {\bf Morphisms}: Morphisms of $\mathbf{CCov}$ which are $\text{\rsfs{C}}$-continuous.
 \end{enumerate}
 \end{defi}
 
 \subsection{The lattice structure of all closure operators}
 For the category $\mathbf{CCov}$ we consider the collection
 \[
 \text{\rsfs C} l(\mathbf{CCov})
 \]
 of all  closure operators on $\mathbf{CCov}$. It is ordered by
\[
\text{\rsfs C}\leqslant\text{\rsfs D}\Leftrightarrow c_\text{\small{\text{\tiny{$\mathcal S$}}}}(\mathcal T)\subseteq d_{\text{\tiny{$\mathcal S$}}}(\mathcal T), 
\]
for all $ \mathcal T\in \mathbf{\mathfrak{S}_\text{{\bsifamily{cc}}}(\mathcal S)}$   and for all $ \mathcal S \,\ \text{ object of}\ \mathbf{CCov}$

This way $\text{\rsfs C} l(\mathbf{CCov})$ inherits a lattice structure from $\mathbf{\mathfrak{S}_\text{{\bsifamily{cc}}}(\mathcal S)}$:

 \begin{prop}
 Every family $\text{\rsfs C}(_{\text{\tiny{$\lambda$}}})_{\text{\tiny{$\lambda\in \Lambda$}}}$ in $ \text{\rsfs C} l(\mathbf{CCov})$ has a join $\bigvee\limits_{\text{\tiny{$\lambda\in \Lambda $}}}{\text{\rsfs C}}_{\text{\tiny{$\lambda $}}}$ and a meet $\bigwedge\limits_{\text{\tiny{$\lambda\in \Lambda $}}}{\text{\rsfs C}}_{\text{\tiny{$\lambda $}}}$ in $ \text{\rsfs C} l(\mathbf{CCov})$. The discrete closure operator
 \[ {\text{\rsfs C}}_{\text{\tiny{$D$}}}=({c_{\text{\tiny{$D$}}}}_{\text{\tiny{$\mathcal L$}}})_{\text{$\mathcal L\in \mathbf{CCov}$}}\,\,\ \text{with}\,\,\ {c_{\text{\tiny{$D$}}}}_{\text{\tiny{$L$}}}(\mathcal T)=\mathcal T\,\,\ \text{for all}\,\ \mathcal T\in \mathbf{\mathfrak{S}_\text{{\bsifamily{cc}}}(\mathcal L)}
 \]
  is the least element in $ \text{\rsfs C} l(\mathbf{CCov})$, and the trivial closure operator
  \[ 
{\text{\rsfs C}}_{\text{\tiny{$T$}}}=({c_{\text{\tiny{$T$}}}}_\text{\tiny{$\mathcal L$}})_{\text{$\mathcal L\in \mathbf{CCov}$}}\,\,\ \text{with}\,\,\ {c_{\text{\tiny{$T$}}}}_{\text{\tiny{$\mathcal L$}}}(\mathcal T)=
  \begin{cases}
\mathcal L& \text{for all}\,\ \mathcal T\in \mathbf{\mathfrak{S}_\text{{\bsifamily{cc}}}(\mathcal L)},\,\ S\ne \mathbf \Phi\\
\mathbf \Phi&\text {if}\,\ \mathcal T=\mathbf \Phi
\end{cases}
 \]
is the largest one.
\end{prop}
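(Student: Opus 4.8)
The plan is to construct both the meet and the join of a family of closure operators \emph{objectwise}, exploiting the fact, established above, that each fibre $\mathfrak{S}_\text{{\bsifamily{cc}}}(\mathcal S)$ is a coframe and therefore a complete lattice, so that arbitrary meets and joins of subobjects are at our disposal in every fibre simultaneously. Concretely, for a family $(\text{\rsfs{C}}_\lambda)_{\lambda\in\Lambda}$ I would set
\[
\Big(\bigwedge_{\lambda}\text{\rsfs{C}}_\lambda\Big)_{\mathcal S}(\mathcal T)=\bigwedge_{\lambda}(c_\lambda)_{\mathcal S}(\mathcal T)
\qquad\text{and}\qquad
\Big(\bigvee_{\lambda}\text{\rsfs{C}}_\lambda\Big)_{\mathcal S}(\mathcal T)=\bigvee_{\lambda}(c_\lambda)_{\mathcal S}(\mathcal T),
\]
the meets and joins on the right being computed in $\mathfrak{S}_\text{{\bsifamily{cc}}}(\mathcal S)$. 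Since the order $\leqslant$ on $\text{\rsfs{C}} l(\mathbf{CCov})$ is itself pointwise, this realizes the poset of closure operators as a subposet of a product of fibres under the pointwise order, and the whole task reduces to two checks: that each of these objectwise families again satisfies $(C_1)$--$(C_3)$, and that it is the greatest lower bound (resp.\ least upper bound) for $\leqslant$.

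For the meet, $(C_1)$ holds because $\mathcal T\subseteq (c_\lambda)_{\mathcal S}(\mathcal T)$ for every $\lambda$ forces $\mathcal T\subseteq\bigwedge_\lambda (c_\lambda)_{\mathcal S}(\mathcal T)$; $(C_2)$ holds since the meet is monotone in each entry; and $(C_3)$ holds because each factor equals $\Phi$ on $\Phi$. That $\bigwedge_\lambda\text{\rsfs{C}}_\lambda$ is a lower bound is immediate, and any lower bound $\text{\rsfs{D}}$ satisfies $d_{\mathcal S}(\mathcal T)\subseteq (c_\lambda)_{\mathcal S}(\mathcal T)$ for all $\lambda$, hence $d_{\mathcal S}(\mathcal T)\subseteq\bigwedge_\lambda (c_\lambda)_{\mathcal S}(\mathcal T)$, giving $\text{\rsfs{D}}\leqslant\bigwedge_\lambda\text{\rsfs{C}}_\lambda$. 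The verification for the join is entirely dual, interchanging meets and joins and reversing the inclusions throughout.

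The point I would stress is \emph{why the join presents no obstacle}: the axioms $(C_1)$--$(C_3)$ impose no idempotency condition on a closure operator. Had one demanded $c_{\mathcal S}\circ c_{\mathcal S}=c_{\mathcal S}$, the naive pointwise join would in general fail to be idempotent, and one would have to build the join by a transfinite iteration of the pointwise join; as the definition stands, the pointwise join already lies in $\text{\rsfs{C}} l(\mathbf{CCov})$, so no iteration is needed. The only genuinely fiddly bookkeeping occurs in checking $(C_2)$ for the trivial operator below, where one must note that $\mathcal U\subseteq\mathcal T$ with $\mathcal U\neq\Phi$ forces $\mathcal T\neq\Phi$.

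Finally, for the extremal elements: $\text{\rsfs{C}}_D$, defined by $(c_D)_{\mathcal L}(\mathcal T)=\mathcal T$, satisfies $(C_1)$--$(C_3)$ trivially, and applying $(C_1)$ to an arbitrary $\text{\rsfs{C}}$ reads $(c_D)_{\mathcal S}(\mathcal T)=\mathcal T\subseteq c_{\mathcal S}(\mathcal T)$, whence $\text{\rsfs{C}}_D\leqslant\text{\rsfs{C}}$ for every $\text{\rsfs{C}}$. For $\text{\rsfs{C}}_T$, which sends each $\mathcal T\neq\Phi$ to the top $\mathcal L$ of $\mathfrak{S}_\text{{\bsifamily{cc}}}(\mathcal L)$ and fixes $\Phi$, the axioms follow from a short case split on whether the argument is $\Phi$ (using the collapse noted above for $(C_2)$); and for any $\text{\rsfs{C}}$ one has $c_{\mathcal S}(\Phi)=\Phi$ by $(C_3)$ while $c_{\mathcal S}(\mathcal T)\subseteq\mathcal L=(c_T)_{\mathcal S}(\mathcal T)$ for $\mathcal T\neq\Phi$, so $\text{\rsfs{C}}\leqslant\text{\rsfs{C}}_T$, completing the proof that $\text{\rsfs{C}}_D$ and $\text{\rsfs{C}}_T$ are the least and largest elements.
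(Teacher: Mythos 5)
Your proposal is correct and follows essentially the same route as the paper: both construct the join and meet objectwise in the complete lattices $\mathfrak{S}_\text{{\bsifamily{cc}}}(\mathcal S)$ and verify $(C_1)$--$(C_3)$ pointwise. In fact your write-up is somewhat more complete than the paper's, which checks the axioms only for the join and dismisses the meet, the extremal operators, and the glb/lub universal properties with ``similarly,'' whereas you verify the universal properties, the extremality of $\text{\rsfs{C}}_D$ and $\text{\rsfs{C}}_T$, and the small case split needed for $(C_2)$ of the trivial operator explicitly.
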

\begin{proof}
For $\Lambda\ne\emptyset$, let $\widehat{\text{\rsfs C}}=\bigvee\limits_{\text{\tiny{$\lambda\in\Lambda $}}}{\text{\rsfs C}}_{\text{\tiny{$\lambda $}}}$, then 
 \[
 \widehat{c_{\text{\tiny{$\mathcal L$}}}}=\bigvee\limits_{\text{\tiny{$\lambda\in \Lambda$}}} {c_{\text{\tiny{$\lambda $}}}}_{\text{\tiny{$\mathcal L$}}},
 \]
 for all  $\mathcal L$ object of $\mathbf{CCov}$, satisfies
\begin{itemize}
 \item $ \mathcal T \subseteq \widehat{c_{\text{\tiny{$\mathcal L$}}}}(\mathcal T)$,  because $ \mathcal T \subseteq {c_{\text{\tiny{$\lambda $}}}}_{\text{\tiny{$\mathcal T$}}}(\mathcal T)$ for all $\mathcal T\in \mathbf{\mathfrak{S}_\text{{\bsifamily{cc}}}(\mathcal L)}$ and for all $\lambda \in \Lambda$.
\item If $\mathcal R \leqslant \mathcal T$ in $\mathbf{\mathfrak{S}_\text{{\bsifamily{cc}}}(\mathcal L)}$ then ${c_{\text{\tiny{$\lambda $}}}}_{\text{\tiny{$\mathcal L$}}}(\mathcal R)\subseteq {c_{\text{\tiny{$\lambda $}}}}_{\text{\tiny{$\mathcal L$}}}(\mathcal T)$   for all $\lambda \in \Lambda$, therefore $ \widehat{c_{\text{\tiny{$\mathcal L$}}}}(\mathcal R)\subseteq \widehat{c_{\text{\tiny{$\mathcal L$}}}}(\mathcal T)$.
\item Since ${c_{\text{\tiny{$\lambda $}}}}_{\text{\tiny{$\mathcal L$}}}(\Phi))=\Phi $  for all $\lambda \in \Lambda$, we have that  $ \widehat{c_{\text{\tiny{$\mathcal L$}}}}(\Phi)=\Phi$.
 \end{itemize}
Similarly  $\bigwedge\limits_{\text{\tiny{$\lambda\in \Lambda $}}}{\text{\rsfs C}}_{\text{\tiny{$\lambda $}}}$,\,\  $ {\text{\rsfs C}}_{\text{\tiny{$D$}}}$ and 
${\text{\rsfs C}}_{\text{\tiny{$T$}}}$ are closure operators.
 \end{proof}
 \begin{coro}\label{c-complete}
  For  every  object $\mathcal L$ of $\mathbf{CCov}$
  \[
 \text{\rsfs C} l(\mathcal L) = \{c_{\text{\tiny{$\mathcal L$}}}\mid c_{\text{\tiny{$\mathcal L$}}}\,\ \text{ is a closure map on}\,\ \mathcal L\}
  \]
  is a complete lattice.
 \end{coro}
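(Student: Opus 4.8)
The plan is to show directly that the poset $\text{\rsfs C} l(\mathcal L)$, ordered pointwise by $c_{\text{\tiny{$\mathcal L$}}}\leqslant d_{\text{\tiny{$\mathcal L$}}}$ iff $c_{\text{\tiny{$\mathcal L$}}}(\mathcal T)\subseteq d_{\text{\tiny{$\mathcal L$}}}(\mathcal T)$ for every $\mathcal T\in\mathfrak{S}_\text{{\bsifamily{cc}}}(\mathcal L)$, admits arbitrary joins; by the standard characterisation of complete lattices (a poset in which every subset has a supremum is complete, the infima being recovered as suprema of lower bounds) this will suffice. The whole argument rests on the fact, established above, that $\mathfrak{S}_\text{{\bsifamily{cc}}}(\mathcal L)$ is a coframe and hence a complete lattice, so that arbitrary joins $\bigvee$ and meets $\bigwedge$ of families of subobjects of $\mathcal L$ are available.

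First I would treat a nonempty family $(c^{\text{\tiny{$\lambda$}}}_{\text{\tiny{$\mathcal L$}}})_{\text{\tiny{$\lambda\in\Lambda$}}}$ of closure maps on $\mathcal L$ and define its candidate join pointwise, $\widehat{c_{\text{\tiny{$\mathcal L$}}}}(\mathcal T)=\bigvee_{\text{\tiny{$\lambda\in\Lambda$}}}c^{\text{\tiny{$\lambda$}}}_{\text{\tiny{$\mathcal L$}}}(\mathcal T)$, the join being computed in $\mathfrak{S}_\text{{\bsifamily{cc}}}(\mathcal L)$. Exactly as in the proof of the preceding Proposition, one checks $(C_1)$--$(C_3)$: extension holds since $\mathcal T\subseteq c^{\text{\tiny{$\lambda$}}}_{\text{\tiny{$\mathcal L$}}}(\mathcal T)$ for each $\lambda$ forces $\mathcal T\subseteq\widehat{c_{\text{\tiny{$\mathcal L$}}}}(\mathcal T)$; monotonicity is inherited because a join of monotone maps is monotone; and the lower bound condition holds because $c^{\text{\tiny{$\lambda$}}}_{\text{\tiny{$\mathcal L$}}}(\Phi)=\Phi$ for every $\lambda$ gives $\widehat{c_{\text{\tiny{$\mathcal L$}}}}(\Phi)=\bigvee_{\text{\tiny{$\lambda$}}}\Phi=\Phi$. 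That $\widehat{c_{\text{\tiny{$\mathcal L$}}}}$ is the least upper bound in the pointwise order is then immediate from the universal property of $\bigvee$ in $\mathfrak{S}_\text{{\bsifamily{cc}}}(\mathcal L)$: it dominates each $c^{\text{\tiny{$\lambda$}}}_{\text{\tiny{$\mathcal L$}}}$, and any closure map above all of them dominates the join at each $\mathcal T$. The dual computation produces the pointwise meet $\bigwedge_{\text{\tiny{$\lambda$}}}c^{\text{\tiny{$\lambda$}}}_{\text{\tiny{$\mathcal L$}}}$ as the greatest lower bound.

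It then remains to supply the join of the empty family, namely a least element of $\text{\rsfs C} l(\mathcal L)$. Here I would invoke the discrete closure ${c_{\text{\tiny{$D$}}}}_{\text{\tiny{$\mathcal L$}}}(\mathcal T)=\mathcal T$ identified in the preceding Proposition: condition $(C_1)$ gives $\mathcal T\subseteq c_{\text{\tiny{$\mathcal L$}}}(\mathcal T)$ for every closure map, so ${c_{\text{\tiny{$D$}}}}_{\text{\tiny{$\mathcal L$}}}\leqslant c_{\text{\tiny{$\mathcal L$}}}$ for all $c_{\text{\tiny{$\mathcal L$}}}\in\text{\rsfs C} l(\mathcal L)$, and ${c_{\text{\tiny{$D$}}}}_{\text{\tiny{$\mathcal L$}}}$ is indeed the bottom. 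Since every subset of $\text{\rsfs C} l(\mathcal L)$ --- empty or not --- now has a supremum, the poset is a complete lattice, the top element being the trivial closure ${c_{\text{\tiny{$T$}}}}_{\text{\tiny{$\mathcal L$}}}$.

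The one point that requires care, and which I regard as the main obstacle, is precisely the empty family on the join side. The naive pointwise formula would assign $\mathcal T\mapsto\bigvee_{\text{\tiny{$\lambda\in\emptyset$}}}c^{\text{\tiny{$\lambda$}}}_{\text{\tiny{$\mathcal L$}}}(\mathcal T)=\Phi$, the bottom of $\mathfrak{S}_\text{{\bsifamily{cc}}}(\mathcal L)$, and this map fails extension $(C_1)$ whenever $\mathcal T\ne\Phi$; so the bottom of $\text{\rsfs C} l(\mathcal L)$ is not the literal pointwise empty join but the discrete closure ${c_{\text{\tiny{$D$}}}}_{\text{\tiny{$\mathcal L$}}}$. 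Once this extremal element is identified correctly, completeness follows entirely from the completeness of the coframe $\mathfrak{S}_\text{{\bsifamily{cc}}}(\mathcal L)$, the verifications of $(C_1)$--$(C_3)$ for pointwise joins and meets being the routine ones already carried out in the proof of the preceding Proposition.
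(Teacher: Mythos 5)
Your proof is correct and follows essentially the same route as the paper: the paper derives this corollary from the preceding Proposition, whose proof likewise forms pointwise joins $\widehat{c_{\text{\tiny{$\mathcal L$}}}}=\bigvee_{\lambda}{c_{\text{\tiny{$\lambda$}}}}_{\text{\tiny{$\mathcal L$}}}$ in the complete lattice $\mathfrak{S}_\text{{\bsifamily{cc}}}(\mathcal L)$, verifies $(C_1)$--$(C_3)$, and identifies the discrete and trivial operators as bottom and top. Your explicit handling of the empty family (observing that the empty pointwise join fails extension, so the bottom must be the discrete closure) and your verification of the least-upper-bound property are careful refinements the paper leaves implicit, but they do not change the argument.
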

  
\subsection{Initial closure operators}
Let \ $\mathbf{\text{\rsfs{C}}\text{-} CCov}$\ be the category of\ $\mathbf{CCov}$-spaces. 
Let  $(\mathcal M,c_{\text{\tiny{$\mathcal M$}}})$ be an object\linebreak of 
$\mathbf{\text{\rsfs{C}}\text{-} CCov}$ and let $\mathcal L$ be an object of $\mathbf{CCov}$. 
For each morphisms 
$\text{\Large \bsifamily{s}}:\mathcal L\rightarrow \mathcal M $ of $\mathbf{CCov}$ 
we define on $\mathcal L$ the map
\begin{equation} \label{c-initial}
c_{\text{\tiny{$\mathcal{L}_{\text{\large{\bsifamily{s}}}}$}}}:= \text{\Large \bsifamily{s}}^{\leftarrow}\centerdot c_{\text{\tiny{$\mathcal M$}}}\centerdot\text{\Large \bsifamily{s}}^{\rightarrow}
\end{equation}
\begin{center}
\begin{tikzpicture}[scale=0.8]
\node (A) at (0,3) {$\mathfrak{A}_\text{{\bsifamily{cc}}}(\mathcal L)$};
\node (B) at (3,3) {$\mathfrak{A}_\text{{\bsifamily{cc}}}(\mathcal M)$};
\node (C) at (0,0) {$\mathfrak{A}_\text{{\bsifamily{cc}}}(\mathcal L)$};
\node (D) at (3,0) {$\mathfrak{A}_\text{{\bsifamily{cc}}}(\mathcal M)$};
\draw[->] (A) -- (B);
\draw[->, dashed] (A) -- (C);
\draw[->] (B) -- (D);
\draw[->] (D) -- (C);
\node at (1.5,3.3) {$ \text{\large \bsifamily{s}}^{\rightarrow} $};
\node at (1.5,-0.3) {$ \text{\large \bsifamily{s}}^{\leftarrow} $};
\node at (3.4,1.5) {$ c_{\text{\tiny{$\mathcal M$}}} $};
\node at (-0.4,1.5) {$ c_{\text{\tiny{${\mathcal L}_{ \text{\tiny \bsifamily{s}}}$}}}$};
\end{tikzpicture}
\end{center}
\begin{prop}\label{c-ini-cont}
Equation (\ref{c-initial}) define a  map of the closure operator $\text{\rsfs C}$ for which the morphism  $\text{\Large \bsifamily{s}}:\mathcal L\rightarrow \mathcal M$ in $\mathbf{CCov}$ is $\text{\rsfs C}$-continuous.
\end{prop}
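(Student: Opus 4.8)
The plan is to check that the map $c_{\mathcal L_s}:=s^{\leftarrow}\centerdot c_{\mathcal M}\centerdot s^{\rightarrow}$ of (\ref{c-initial}) is a closure operator on $\mathcal L$ in the sense of $(C_1)$--$(C_3)$, and then that $s:\mathcal L\rightarrow\mathcal M$ is $\text{\rsfs{C}}$-continuous for it. The engine behind every step is that $s^{\rightarrow}$ and $s^{\leftarrow}$ form an adjoint pair on the coframes of subobjects, which yields the unit inclusion $\mathcal T\subseteq s^{\leftarrow}(s^{\rightarrow}(\mathcal T))$ and the counit inclusion $s^{\rightarrow}(s^{\leftarrow}(\mathcal U))\subseteq\mathcal U$, together with the monotonicity of each of $s^{\rightarrow}$, $c_{\mathcal M}$ and $s^{\leftarrow}$.

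For $(C_1)$ I would start from the extension property of $c_{\mathcal M}$, namely $s^{\rightarrow}(\mathcal T)\subseteq c_{\mathcal M}(s^{\rightarrow}(\mathcal T))$, push this inclusion through the monotone map $s^{\leftarrow}$, and prepend the unit inclusion to obtain
\[
\mathcal T\subseteq s^{\leftarrow}(s^{\rightarrow}(\mathcal T))\subseteq s^{\leftarrow}\big(c_{\mathcal M}(s^{\rightarrow}(\mathcal T))\big)=c_{\mathcal L_s}(\mathcal T).
\]
Axiom $(C_2)$ is immediate, since $c_{\mathcal L_s}$ is a composite of three monotone maps and is therefore monotone. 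For $(C_3)$ I would use that both the direct and the inverse image send the bottom subobject $\Phi$ to $\Phi$, so that $c_{\mathcal L_s}(\Phi)=s^{\leftarrow}\big(c_{\mathcal M}(\Phi)\big)=s^{\leftarrow}(\Phi)=\Phi$ by $(C_3)$ for $c_{\mathcal M}$.

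Continuity then follows in one line from the counit:
\[
s^{\rightarrow}\big(c_{\mathcal L_s}(\mathcal T)\big)=s^{\rightarrow}\big(s^{\leftarrow}(c_{\mathcal M}(s^{\rightarrow}(\mathcal T)))\big)\subseteq c_{\mathcal M}(s^{\rightarrow}(\mathcal T)),
\]
which is exactly the defining inequality (\ref{c-conti}). The main obstacle, and the only step that is not purely formal, is the ambient well-definedness that must be settled before these computations: one has to verify that $s^{\rightarrow}$ and $s^{\leftarrow}$ genuinely restrict to maps between $\mathfrak{S}_\text{{\bsifamily{cc}}}(\mathcal L)$ and $\mathfrak{S}_\text{{\bsifamily{cc}}}(\mathcal M)$ (so that the composite lands in $\mathfrak{S}_\text{{\bsifamily{cc}}}(\mathcal L)$ and preserves $\Phi$), and that on these coframes they really do form the adjoint pair used above. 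This is where the defining conditions of a $\mathbf{CCov}$-morphism ($S_1\lhd_1 s^{-}S_2$ and compatibility with ${\downarrow}$) are needed, since for an arbitrary relation the existential direct and inverse images need not be adjoint; once the adjunction and the preservation of $\Phi$ are in hand, the verification of $(C_1)$--$(C_3)$ and of $\text{\rsfs{C}}$-continuity reduces to the inclusions displayed above.
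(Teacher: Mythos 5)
Your proposal is correct modulo the same tacit assumptions the paper itself makes, and it follows the paper's route exactly: extension via the unit $\mathcal T\subseteq s^{\leftarrow}(s^{\rightarrow}(\mathcal T))$ combined with $(C_1)$ for $c_{\mathcal M}$, monotonicity because the composite of three monotone maps is monotone, preservation of $\Phi$ for $(C_3)$, and continuity from the counit $s^{\rightarrow}(s^{\leftarrow}(\mathcal U))\subseteq\mathcal U$. Two differences deserve record. First, your $(C_3)$ is an actual argument: the paper's corresponding line asserts only the tautology $(s^{\leftarrow}\centerdot c_{\mathcal M}\centerdot s^{\rightarrow})(\Phi)\subseteq(s^{\leftarrow}\centerdot c_{\mathcal M}\centerdot s^{\rightarrow})(\Phi)$, so your chain $s^{\rightarrow}(\Phi)=\Phi$, $c_{\mathcal M}(\Phi)=\Phi$, $s^{\leftarrow}(\Phi)=\Phi$ supplies what the paper omits. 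Second, you explicitly isolate the adjunction $s^{\rightarrow}\dashv s^{\leftarrow}$ on the subobject coframes as the genuine obligation, whereas the paper uses the unit (hidden in the word ``equivalent'' in its $(C_1)$ step) and the counit (in its final display) without comment. Be aware, however, that your proposed discharge of that obligation is itself questionable: for a relation $s\subseteq S_1\times S_2$, the existential direct image $s^{\rightarrow}$ is left adjoint to the \emph{universal} preimage, and this coincides with the existential preimage $s^{\leftarrow}$ precisely when $s$ is functional, so the $\mathbf{CCov}$-morphism axioms ($S_1\lhd_1 s^{-}S_2$ and ${\downarrow}$-compatibility), which do not force functionality, do not obviously yield the unit and counit inclusions. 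That step therefore remains unproved in both your proposal and the paper's proof; yours is simply more candid about where the real work lies.
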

\begin{proof}\ 
\begin{enumerate}
\item[($C_1)$] $\left(\text{Extension}\right)$\,\ Let  $\mathcal L$ be in $\mathbf{CCov}$, then since $ c_{\text{\tiny{$\mathcal M$}}}$ is a closure map on $\mathcal M$ it follows that $\text{\Large \bsifamily{s}}^{\rightarrow}(\mathcal T)\subseteq c_{\text{\tiny{$\mathcal M$}}}\big( \text{\Large \bsifamily{s}}^{\rightarrow}(\mathcal T)\big)$, which is equivalent to saying that $\mathcal T\subseteq (\text{\Large \bsifamily{s}}^{\leftarrow}\centerdot c_{\text{\tiny{$\mathcal M$}}}\centerdot \text{\Large \bsifamily{s}}^{\rightarrow})(\mathcal T)= c_{\text{\tiny{$\mathcal{L}_{\text{\large{\bsifamily{s}}}}$}}}(\mathcal T)$;
 \item[($C_2)$] $\left(\text{Monotonicity}\right)$\,\   $\mathcal R\subseteq\mathcal T$ in $ \mathfrak{S}_\text{{\bsifamily{cc}}}(\mathcal L)$, implies $ \text{\Large \bsifamily{s}}^{\rightarrow}(\mathcal R)\subseteq \text{\Large \bsifamily{s}}^{\rightarrow}(\mathcal T)$, then we have
 $$(c_{\text{\tiny{$\mathcal M$}}}\centerdot \text{\Large \bsifamily{s}}^{\rightarrow})(\mathcal R)\subseteq (c_{\text{\tiny{$\mathcal M$}}}\centerdot\text{\Large \bsifamily{s}}^{\rightarrow})(\mathcal T),$$
 consequently
 $$(\text{\Large \bsifamily{s}}^{\leftarrow}\centerdot c_{\text{\tiny{$\mathcal M$}}}\centerdot \text{\Large \bsifamily{s}}^{\rightarrow})(\mathcal R)\subseteq (\text{\Large \bsifamily{s}}^{\leftarrow}\centerdot c_{\text{\tiny{$\mathcal M$}}}\centerdot\text{\Large \bsifamily{s}}^{\rightarrow})(\mathcal T).$$
\item[($C_3$)]  $\left(\text{Lower bound}\right)$\,\
$(\text{\Large \bsifamily{s}}^{\leftarrow}\centerdot c_{\text{\tiny{$\mathcal M$}}}\centerdot \text{\Large \bsifamily{s}}^{\rightarrow})(\Phi)\subseteq (\text{\Large \bsifamily{s}}^{\leftarrow}\centerdot c_{\text{\tiny{$\mathcal M$}}}\centerdot\text{\Large \bsifamily{s}}^{\rightarrow})(\Phi).$
\end{enumerate}
Finally, 
 
\[
 \text{\Large \bsifamily{s}}^{\rightarrow}\big(c_{\text{\tiny{$\mathcal{L}_{\text{\large{\bsifamily{s}}}}$}}}(\mathcal T)  \big)=\text{\Large \bsifamily{s}}^{\rightarrow}\big(\text{\Large \bsifamily{s}}^{\leftarrow}\centerdot c_{\text{\tiny{$\mathcal M$}}}\centerdot\text{\Large \bsifamily{s}}^{\rightarrow}(\mathcal T)\big)\subseteq c_{\text{\tiny{$\mathcal M$}}}\big(  \text{\Large \bsifamily{s}}^{\rightarrow}(\mathcal T)\big)
\]
 for all $\mathcal T \in \mathfrak{S}_\text{{\bsifamily{cc}}}(\mathcal L)$.
 
\end{proof}
It is clear that $c_{\text{\tiny{$\mathcal{L}_{\text{\large{\bsifamily{s}}}}$}}}$ is the coarsest closure map on $\mathcal L$ for which the morphism $\text{\Large \bsifamily{s}}$ is $\text{\rsfs{C}}$-continuous; more precisaly

\begin{prop}\label{c-unique}

Let $(\mathcal M,  c_{\text{\tiny{$\mathcal M$}}})$ and $(\mathcal N,  c_{\text{\tiny{$\mathcal N$}}})$  be objects of $\mathbf{\text{\rsfs{C}}\text{-} CCov}$, and let $\mathcal L$ be an object of  $\mathbf{CCov}$. For each morphism  $\text{\Large \bsifamily{t}}:\mathcal N\rightarrow \mathcal L$ in  $\mathbf{CCov}$ and for the 
 $\text{\rsfs{C}}$-continuous morphism $\text{\Large \bsifamily{s}}:(\mathcal L, c_{\text{\tiny{$\mathcal{L}_{\text{\large{\bsifamily{s}}}}$}}})\rightarrow (\mathcal  M, c_{\text{\tiny{$\mathcal M$}}})$, \ $\text{\Large \bsifamily{t}}$  is $\text{\rsfs{C}}$-continuous if and only if $\text{\Large \bsifamily{t}}\centerdot \text{\Large \bsifamily{s}}$ is $\text{\rsfs{C}}$-continuous.
\end{prop}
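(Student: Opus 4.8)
The statement is the universal property characterising the initial closure map $c_{\mathcal{L}_{\text{\bsifamily{s}}}}$ of (\ref{c-initial}): it says that precomposition with the $\text{\rsfs{C}}$-continuous morphism $\text{\Large \bsifamily{s}}$ detects the $\text{\rsfs{C}}$-continuity of maps into $(\mathcal L, c_{\mathcal{L}_{\text{\bsifamily{s}}}})$. The plan is to prove the two implications separately, relying on three ingredients already available in the excerpt: functoriality of the direct image, so that $\text{\Large \bsifamily{s}}^{\rightarrow}\circ\text{\Large \bsifamily{t}}^{\rightarrow}=(\text{\Large \bsifamily{t}}\centerdot \text{\Large \bsifamily{s}})^{\rightarrow}$ for the composite $\text{\Large \bsifamily{t}}\centerdot \text{\Large \bsifamily{s}}:\mathcal N\rightarrow \mathcal M$; the adjunction $\text{\Large \bsifamily{s}}^{\rightarrow}\dashv\text{\Large \bsifamily{s}}^{\leftarrow}$ between direct and inverse images on the coframes $\mathfrak{S}_\text{{\bsifamily{cc}}}$, which was already invoked above to pass between (\ref{c-conti}) and (\ref{equi-conti}); and Proposition \ref{c-ini-cont}, which guarantees that $\text{\Large \bsifamily{s}}:(\mathcal L, c_{\mathcal{L}_{\text{\bsifamily{s}}}})\rightarrow(\mathcal M, c_{\mathcal M})$ is itself $\text{\rsfs{C}}$-continuous.

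For the forward implication I would assume $\text{\Large \bsifamily{t}}$ is $\text{\rsfs{C}}$-continuous. Then $\text{\Large \bsifamily{t}}:(\mathcal N, c_{\mathcal N})\rightarrow(\mathcal L, c_{\mathcal{L}_{\text{\bsifamily{s}}}})$ and $\text{\Large \bsifamily{s}}:(\mathcal L, c_{\mathcal{L}_{\text{\bsifamily{s}}}})\rightarrow(\mathcal M, c_{\mathcal M})$ are two $\text{\rsfs{C}}$-continuous morphisms, the latter by Proposition \ref{c-ini-cont}; hence their composite $\text{\Large \bsifamily{t}}\centerdot \text{\Large \bsifamily{s}}$ is $\text{\rsfs{C}}$-continuous by the Proposition asserting that a composite of two $\text{\rsfs{C}}$-continuous morphisms is again $\text{\rsfs{C}}$-continuous. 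This direction is immediate and carries no real difficulty.

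For the converse I would assume $\text{\Large \bsifamily{t}}\centerdot \text{\Large \bsifamily{s}}$ is $\text{\rsfs{C}}$-continuous and verify $\text{\Large \bsifamily{t}}^{\rightarrow}(c_{\mathcal N}(\mathcal T))\subseteq c_{\mathcal{L}_{\text{\bsifamily{s}}}}(\text{\Large \bsifamily{t}}^{\rightarrow}(\mathcal T))$ for every $\mathcal T\in\mathfrak{S}_\text{{\bsifamily{cc}}}(\mathcal N)$. Unfolding $c_{\mathcal{L}_{\text{\bsifamily{s}}}}$ from (\ref{c-initial}), using functoriality to read $\text{\Large \bsifamily{s}}^{\rightarrow}\circ\text{\Large \bsifamily{t}}^{\rightarrow}$ as $(\text{\Large \bsifamily{t}}\centerdot \text{\Large \bsifamily{s}})^{\rightarrow}$, and combining the $\text{\rsfs{C}}$-continuity of the composite with the monotonicity of $\text{\Large \bsifamily{s}}^{\leftarrow}$ and the unit $X\subseteq\text{\Large \bsifamily{s}}^{\leftarrow}(\text{\Large \bsifamily{s}}^{\rightarrow}(X))$ of the adjunction, I obtain the chain
\[
\text{\Large \bsifamily{t}}^{\rightarrow}\big(c_{\mathcal N}(\mathcal T)\big)\subseteq \text{\Large \bsifamily{s}}^{\leftarrow}\Big(\text{\Large \bsifamily{s}}^{\rightarrow}\big(\text{\Large \bsifamily{t}}^{\rightarrow}(c_{\mathcal N}(\mathcal T))\big)\Big)\subseteq \text{\Large \bsifamily{s}}^{\leftarrow}\Big(c_{\mathcal M}\big(\text{\Large \bsifamily{s}}^{\rightarrow}(\text{\Large \bsifamily{t}}^{\rightarrow}(\mathcal T))\big)\Big)=c_{\mathcal{L}_{\text{\bsifamily{s}}}}\big(\text{\Large \bsifamily{t}}^{\rightarrow}(\mathcal T)\big),
\]
which is exactly the required inclusion. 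The step deserving care, and the main obstacle, is the first inclusion: it rests on the unit of $\text{\Large \bsifamily{s}}^{\rightarrow}\dashv\text{\Large \bsifamily{s}}^{\leftarrow}$, i.e. on $\text{\Large \bsifamily{s}}^{\leftarrow}\text{\Large \bsifamily{s}}^{\rightarrow}$ being inflationary. Since every morphism of $\mathbf{CCov}$ satisfies $S_1\lhd_1\text{\Large \bsifamily{s}}^{-}S_2$ (writing $\mathcal L=(S_1,\lhd_1)$ and $\mathcal M=(S_2,\lhd_2)$), this adjunction is indeed available at the level of covers, so I would make sure the first inclusion is read in the coframe $\mathfrak{S}_\text{{\bsifamily{cc}}}$ (up to $=_{\lhd}$) rather than as a naive set inclusion; granting this, the converse follows and the equivalence is established.
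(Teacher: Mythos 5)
Your proof is correct and, in the substantive direction (the composite being $\text{\rsfs{C}}$-continuous implies $\text{\Large \bsifamily{t}}$ is), it is exactly the paper's argument: rewrite the hypothesis via $(\text{\Large \bsifamily{s}}\centerdot\text{\Large \bsifamily{t}})^{\rightarrow}=\text{\Large \bsifamily{s}}^{\rightarrow}\centerdot\text{\Large \bsifamily{t}}^{\rightarrow}$, apply the unit $X\subseteq\text{\Large \bsifamily{s}}^{\leftarrow}\big(\text{\Large \bsifamily{s}}^{\rightarrow}(X)\big)$ together with monotonicity of $\text{\Large \bsifamily{s}}^{\leftarrow}$, and recognize the right-hand side as the initial closure map (\ref{c-initial}). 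The paper in fact proves only that direction, so your explicit forward implication (Proposition \ref{c-ini-cont} plus closure of $\text{\rsfs{C}}$-continuous morphisms under composition) and your caveat that the unit inclusion must be read in $\mathbf{\mathfrak{S}_\text{{\bsifamily{cc}}}(\mathcal L)}$ up to $=_{\lhd}$ rather than as a naive set inclusion only make the argument more complete than the original.
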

\begin{proof}
Suppose that $\text{\Large \bsifamily{s}}\centerdot \text{\Large \bsifamily{t}}$ is $\text{\rsfs{C}}$-continuous, i. e.
$$
(\text{\Large \bsifamily{s}}\centerdot \text{\Large \bsifamily{t}})^{\rightarrow}\big (c_{\text{\tiny{$\mathcal N$}}}(\mathcal U)\big) \subseteq c_{\text{\tiny{$\mathcal M$}}}\left( (\text{\Large \bsifamily{s}}\centerdot \text{\Large \bsifamily{t}})^{\rightarrow} (\mathcal U)\right)
$$
for all $\mathcal U \in \mathfrak{S}_\text{{\bsifamily{cc}}}(\mathcal N)$.

It follows that 
\begin{align*}
 \text{\Large \bsifamily{t}}^{\rightarrow}[c_ {\text{\tiny{$\mathcal N$}}}(\mathcal U)]&\subseteq \text{\Large \bsifamily{s}}^{\leftarrow}\big[c_ {\text{\tiny{$\mathcal M$}}}\big(\text{\Large \bsifamily{s}}^{\rightarrow}\big(\text{\Large \bsifamily{t}}^{\rightarrow}(\mathcal U)\big)]\\
  &\subseteq \big( \text{\Large \bsifamily{s}}^{\leftarrow}\cdot c_ {\text{\tiny{$\mathcal M$}}}\cdot\text{\Large \bsifamily{s}}^{\rightarrow}\big)(\text{\Large \bsifamily{t}}^{\rightarrow}(\mathcal U)\\
 & =c_{\text{\tiny{$\mathcal{L}_{\text{\large{\bsifamily{s}}}}$}}}\big(\text{\Large \bsifamily{t}}^{\rightarrow}(\mathcal U)\big),\\
\end{align*}
i.e.  $\text{\Large \bsifamily{t}}$  is $\text{\rsfs{C}}$-continuous.
\end{proof}
As a consequence of corollary(\ref{c-complete}), proposition(\ref{c-ini-cont}) and proposition (\ref{c-unique}) (cf. \cite{AHS}), we obtain 

 \begin{theorem}
The forgetful functor $U: \mathbf{\text{\rsfs{C}}\text{-} CCov}\rightarrow\mathbf{CCov}$ is topological, i.e. the concrete category $\big(\mathbf{\text{\rsfs{C}}\text{-} CCov},\ U\big)$ is topological.
 \end{theorem}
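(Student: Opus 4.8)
The plan is to invoke the standard characterization of topological functors (cf. \cite{AHS}): the concrete functor $U$ is topological precisely when every $U$-structured source admits a unique $U$-initial lift. Accordingly I fix a structured source, namely an object $\mathcal{L}$ of $\mathbf{CCov}$ together with a family $(s_i : \mathcal{L}\rightarrow \mathcal{M}_i)_{i\in I}$ of $\mathbf{CCov}$-morphisms whose codomains carry closure maps $c_{\mathcal{M}_i}$, so that each $(\mathcal{M}_i,c_{\mathcal{M}_i})$ is an object of $\mathbf{\text{\rsfs{C}}\text{-} CCov}$. The task is to produce a closure map on $\mathcal{L}$ making every $s_i$ $\text{\rsfs{C}}$-continuous and enjoying the initial universal property, and then to argue uniqueness.

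First I would, for each index $i$, apply Proposition \ref{c-ini-cont} to the single morphism $s_i$ to obtain the closure map $c_i := s_i^{\leftarrow}\centerdot c_{\mathcal{M}_i}\centerdot s_i^{\rightarrow}$ on $\mathcal{L}$, for which $s_i$ is $\text{\rsfs{C}}$-continuous and which is, as noted after that proposition, the coarsest such closure. Next, using Corollary \ref{c-complete}, which guarantees that the fibre $\text{\rsfs C} l(\mathcal{L})$ is a complete lattice, I would form the meet $c_{\mathcal{L}} := \bigwedge_{i\in I} c_i$. Since meets in $\text{\rsfs C} l(\mathcal{L})$ are computed pointwise --- one checks at once that the pointwise infimum of closure maps again satisfies ($C_1$)--($C_3$) --- we have $c_{\mathcal{L}}(\mathcal{T}) = \bigwedge_{i} c_i(\mathcal{T})$ for every $\mathcal{T}$. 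As $c_{\mathcal{L}}\leqslant c_i$ for each $i$ and the direct image $s_i^{\rightarrow}$ is monotone, we get $s_i^{\rightarrow}(c_{\mathcal{L}}(\mathcal{T}))\subseteq s_i^{\rightarrow}(c_i(\mathcal{T}))\subseteq c_{\mathcal{M}_i}(s_i^{\rightarrow}(\mathcal{T}))$, so every $s_i$ remains $\text{\rsfs{C}}$-continuous for $c_{\mathcal{L}}$; thus $(\mathcal{L},c_{\mathcal{L}})$ is a lift of the source in $\mathbf{\text{\rsfs{C}}\text{-} CCov}$.

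The heart of the argument is the initial universal property, and here Proposition \ref{c-unique} does the decisive work. Given any object $(\mathcal{N},c_{\mathcal{N}})$ of $\mathbf{\text{\rsfs{C}}\text{-} CCov}$ and any $\mathbf{CCov}$-morphism $t:\mathcal{N}\rightarrow\mathcal{L}$, I would show that $t$ is $\text{\rsfs{C}}$-continuous into $(\mathcal{L},c_{\mathcal{L}})$ if and only if every composite $s_i\centerdot t$ is $\text{\rsfs{C}}$-continuous. The forward implication is immediate from continuity of each $s_i$ and the fact that continuous morphisms compose. For the converse, Proposition \ref{c-unique} applied to each $s_i$ with its closure $c_i$ turns continuity of $s_i\centerdot t$ into the statement $t^{\rightarrow}(c_{\mathcal{N}}(\mathcal{U}))\subseteq c_i(t^{\rightarrow}(\mathcal{U}))$ for every $i$; taking the infimum over $i$ on the right and using $A\subseteq \bigwedge_i B_i \Leftrightarrow (\forall i)\,A\subseteq B_i$ yields exactly $t^{\rightarrow}(c_{\mathcal{N}}(\mathcal{U}))\subseteq c_{\mathcal{L}}(t^{\rightarrow}(\mathcal{U}))$, i.e. the continuity of $t$ into $(\mathcal{L},c_{\mathcal{L}})$. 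This identifies $(\mathcal{L},c_{\mathcal{L}})$ as an initial lift.

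Finally I would record uniqueness: if $c$ and $c'$ were two initial structures for the same source, then applying initiality to the identity of $\mathcal{L}$ in each direction forces $c\leqslant c'$ and $c'\leqslant c$, whence $c=c'$ by antisymmetry in $\text{\rsfs C} l(\mathcal{L})$. With existence and uniqueness of initial lifts established, the cited characterization (\cite{AHS}) gives that $U$ is topological. The step I expect to be the main obstacle is the converse half of the universal property: everything hinges on interchanging the meet $\bigwedge_i c_i$ with the inclusion defining continuity, so the pointwise computation of meets furnished by Corollary \ref{c-complete} must be combined carefully with the single-morphism transfer of Proposition \ref{c-unique}; choosing the meet (rather than the join) is exactly what makes the biconditional, and hence initiality, go through.
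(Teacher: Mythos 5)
Your proposal is correct and follows exactly the route the paper intends: the paper gives no detailed argument, merely asserting the theorem "as a consequence of" Corollary~\ref{c-complete}, Proposition~\ref{c-ini-cont} and Proposition~\ref{c-unique} (citing \cite{AHS}), and your proof is precisely the elaboration of that sketch --- single-morphism initial structures from Proposition~\ref{c-ini-cont}, their pointwise meet in the complete fibre $\text{\rsfs C}l(\mathcal L)$ from Corollary~\ref{c-complete}, the universal property via Proposition~\ref{c-unique}, and uniqueness by antisymmetry in the fibre. No genuinely different decomposition or new idea is involved; you have simply supplied the details the paper omits.
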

 \subsection{Closed and dense subobjects}
 In this section we introduce a  notion of closed subobjects different from the one allowed in (\cite{CS}).
  
 \begin{defi}
 An subobject $\mathcal T$ of a convergent cover $\mathcal S$ is called
 \begin{itemize}
 \item $\mathbf{\text{\rsfs{C}}}$-closed (in $\mathcal S$) if $  c_{\text{\tiny{$\mathcal S$}}}(\mathcal T)=\mathcal T $;
 \item $\mathbf{\text{\rsfs{C}}}$-dense \big(in $\mathcal S$\big) if $c_{\text{\tiny{$\mathcal S$}}}(\mathcal T)=\mathcal S $.
 \end{itemize}
 \end{defi}
 It is easy to verify thst for the Kuratowski closure operator $K$ of $\mathbf{Top}$, $K$-closed and $K$-dense for a subspace inclusion $M \rightarrowtail X$ means closed and dense in the usual topological sense, respectively.
 
 The $\mathbf{\text{\rsfs{C}}}$-continuity condition (\ref{c-conti}) implies that $\mathbf{\text{\rsfs{C}}}$-closedness is preserve by inverse images,and that $\mathbf{\text{\rsfs{C}}}$-denseness is preserved by images:
 \begin{prop}
Let  $\text{\Large \bsifamily{s}}:\mathcal L\rightarrow \mathcal M $ be a morphism in $\mathbf  CCov$,
 
 \begin{enumerate}
 \item  If $\mathcal V$ is $\mathbf{\text{\rsfs{C}}}$-closed in $\mathcal M$, then $\text{\Large \bsifamily{s}}^{\leftarrow}(\mathcal V)$ is $\mathbf{\text{\rsfs{C}}}$-closed  in $\mathcal L$,
 \item  If $\mathcal U$ is $\mathbf{\text{\rsfs{C}}}$-dense in $\mathcal L$, and $\text{\Large \bsifamily{s}}^{\rightarrow}(\mathcal L)= \mathcal L$, then $\text{\Large \bsifamily{s}}^{\rightarrow}(\mathcal U)$ is $\mathbf{\text{\rsfs{C}}}$-dense in $M$.
 \end{enumerate}
 \end{prop}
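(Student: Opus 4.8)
The plan is to read both assertions off the two equivalent forms of the $\text{\rsfs{C}}$-continuity condition, (\ref{c-conti}) and (\ref{equi-conti}), combined with the extension axiom $(C_1)$ and the observation that $\mathcal M$ and $\mathcal L$ are the top elements of their respective coframes $\mathfrak{S}_\text{{\bsifamily{cc}}}(\mathcal M)$ and $\mathfrak{S}_\text{{\bsifamily{cc}}}(\mathcal L)$. Throughout I treat $\text{\Large \bsifamily{s}}$ as a $\text{\rsfs{C}}$-continuous morphism, since that is the hypothesis under which the preceding remark is stated; in part (2) I also read the surjectivity hypothesis as $\text{\Large \bsifamily{s}}^{\rightarrow}(\mathcal L)=\mathcal M$ and the conclusion as $\text{\rsfs{C}}$-denseness in $\mathcal M$.

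For part (1), I would first note that the extension axiom $(C_1)$ gives $\text{\Large \bsifamily{s}}^{\leftarrow}(\mathcal V)\subseteq c_{\text{\tiny{$\mathcal L$}}}(\text{\Large \bsifamily{s}}^{\leftarrow}(\mathcal V))$ at once, so only the reverse inclusion requires work. Applying the inverse-image form of continuity (\ref{equi-conti}) with $\mathcal U=\mathcal V$ yields $c_{\text{\tiny{$\mathcal L$}}}(\text{\Large \bsifamily{s}}^{\leftarrow}(\mathcal V))\subseteq \text{\Large \bsifamily{s}}^{\leftarrow}(c_{\text{\tiny{$\mathcal M$}}}(\mathcal V))$, and $\text{\rsfs{C}}$-closedness of $\mathcal V$ lets me replace $c_{\text{\tiny{$\mathcal M$}}}(\mathcal V)$ by $\mathcal V$. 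The two inclusions force $c_{\text{\tiny{$\mathcal L$}}}(\text{\Large \bsifamily{s}}^{\leftarrow}(\mathcal V))=\text{\Large \bsifamily{s}}^{\leftarrow}(\mathcal V)$, which is precisely $\text{\rsfs{C}}$-closedness of $\text{\Large \bsifamily{s}}^{\leftarrow}(\mathcal V)$ in $\mathcal L$.

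For part (2), I would dualize the strategy: the easy inclusion is $c_{\text{\tiny{$\mathcal M$}}}(\text{\Large \bsifamily{s}}^{\rightarrow}(\mathcal U))\subseteq \mathcal M$, since $\mathcal M$ is the top of $\mathfrak{S}_\text{{\bsifamily{cc}}}(\mathcal M)$. For the reverse inclusion I would apply the direct-image form of continuity (\ref{c-conti}) to $\mathcal U$, obtaining $\text{\Large \bsifamily{s}}^{\rightarrow}(c_{\text{\tiny{$\mathcal L$}}}(\mathcal U))\subseteq c_{\text{\tiny{$\mathcal M$}}}(\text{\Large \bsifamily{s}}^{\rightarrow}(\mathcal U))$; $\text{\rsfs{C}}$-denseness of $\mathcal U$ rewrites the left side as $\text{\Large \bsifamily{s}}^{\rightarrow}(\mathcal L)$, and the surjectivity hypothesis $\text{\Large \bsifamily{s}}^{\rightarrow}(\mathcal L)=\mathcal M$ then gives $\mathcal M\subseteq c_{\text{\tiny{$\mathcal M$}}}(\text{\Large \bsifamily{s}}^{\rightarrow}(\mathcal U))$. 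Equality follows, so $\text{\Large \bsifamily{s}}^{\rightarrow}(\mathcal U)$ is $\text{\rsfs{C}}$-dense in $\mathcal M$.

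I do not expect a genuine obstacle: each part is a short sandwich argument between the appropriate continuity inequality and a structural bound. The only point demanding care is pairing the correct continuity form with the correct assertion --- the inverse-image form (\ref{equi-conti}) for closedness and the direct-image form (\ref{c-conti}) for density --- and keeping the coframe-top bounds oriented correctly, so that the two inclusions genuinely meet at an equality rather than merely at a one-sided containment.
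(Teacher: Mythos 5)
Your proposal is correct and takes essentially the same route as the paper: part (1) is exactly the inverse-image continuity inclusion (\ref{equi-conti}) applied to $\mathcal V$ followed by replacing $c_{\text{\tiny{$\mathcal M$}}}(\mathcal V)$ with $\mathcal V$, and part (2) is exactly the direct-image inclusion (\ref{c-conti}) combined with denseness of $\mathcal U$ and the surjectivity hypothesis. The only differences are cosmetic: you spell out the two trivial bounds (the extension axiom $(C_1)$ in part (1), and the top of $\mathfrak{S}_\text{{\bsifamily{cc}}}(\mathcal M)$ in part (2)) that the paper leaves implicit, and you sensibly read the statement's surjectivity condition as $\text{\Large \bsifamily{s}}^{\rightarrow}(\mathcal L)=\mathcal M$, which is what the paper's proof actually uses.
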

 
\begin{proof}\
\begin{enumerate}
\item If $  c_{\text{\tiny{$\mathcal M$}}}(\mathcal V)=\mathcal V $ then  
  $c_{\text{\tiny{$\mathcal L$}}}\big(\text{\Large \bsifamily{s}}^{\leftarrow}(\mathcal V)\big)\subseteq \text{\Large \bsifamily{s}}^{\leftarrow}\big(c_{\text{\tiny{$\mathcal M$}}}(\mathcal V)\big)= \text{\Large \bsifamily{s}}^{\leftarrow}(\mathcal V) $.
\item If $c_{\text{\tiny{$\mathcal L$}}}(\mathcal U) = \mathcal L$  and $\text{\Large \bsifamily{s}}^{\rightarrow}(\mathcal L)= \mathcal L$, then $\mathcal L= \text{\Large \bsifamily{s}}^{\rightarrow}(\mathcal L) = \text{\Large \bsifamily{s}}^{\rightarrow}\big( c_{\text{\tiny{$\mathcal L$}}}(\mathcal U)\big)]\subseteq c_{\text{\tiny{$\mathcal M$}}}(\text{\Large \bsifamily{s}}^{\rightarrow}(\mathcal  U)). $
\end{enumerate}  
\end{proof}

\subsection{A reflective subcategory of $\mathbf{\mathfrak{S}_\text{{\bsifamily{cc}}}(\mathcal L)}$}
For every $\mathcal L$ object of $\mathbf{CCov}$, let $\mathbf{\mathfrak{S}_\text{{\bsifamily{cc}}}(\mathcal L)}^{\mathfrak C}$ denote the  collection of $\mathbf{\text{\rsfs{C}}}$-closed subobjects of $\mathcal L$

Since for every  $\mathcal L\in \mathbf{CCov}$, the inclusion $i: \mathbf{\mathfrak{S}_\text{{\bsifamily{cc}}}(\mathcal L)}^{\mathfrak C}\hookrightarrow \mathbf{\mathfrak{S}_\text{{\bsifamily{cc}}}(\mathcal L)}$ preserves all meets, it has a left Galois adjoint\footnote{We use The Galois Adjunction Theorem in CZF; see \cite{PA}}
\begin{equation}\label{c-clos}
\mathfrak{R}_{_\text{\tiny{$\mathcal L$}}}:\mathbf{\mathfrak{S}_\text{{\bsifamily{cc}}}(\mathcal L)}\rightarrow \ \mathbf{\mathfrak{S}_\text{{\bsifamily{cc}}}(\mathcal L)}^{\mathfrak C}\,\ \text{defined by}\,\ 
\mathfrak{R}_{_\text{\tiny{L}}}(\mathcal T)=\bigcap\{\mathcal V\in\mathbf{\mathfrak{S}_\text{{\bsifamily{cc}}}(\mathcal L)}\mid \mathcal T\subseteq i(\mathcal V)\}.
\end{equation}
\begin{prop}
 The family  $\mathfrak R=(\mathfrak{R}_{_\text{\tiny{$\mathcal L$}}})_{\text{$\mathcal L\in \mathbf{CCov}$}}$ of maps  (\ref{c-clos})  is another closure operator of the category  $\mathbf{CCov}$.
\end{prop}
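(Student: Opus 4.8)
The plan is to check the three defining clauses $(C_1)$, $(C_2)$, $(C_3)$ of a closure operator for the family $\mathfrak R=(\mathfrak{R}_{\mathcal L})_{\mathcal L\in\mathbf{CCov}}$, reading each $\mathfrak{R}_{\mathcal L}$ as a self-map of $\mathbf{\mathfrak{S}_\text{{\bsifamily{cc}}}(\mathcal L)}$ via the inclusion $i$. Conceptually the statement is an instance of the standard fact that, for a Galois adjunction $\mathfrak{R}_{\mathcal L}\dashv i$, the round trip $i\circ\mathfrak{R}_{\mathcal L}$ is extensive and monotone; since the present notion of closure operator does not demand idempotency, these two properties together with the behaviour at the bottom element are all that must be established. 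I would therefore argue each clause directly from formula (\ref{c-clos}).

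For extension $(C_1)$, I would use that $i$ preserves meets: writing $\mathfrak{R}_{\mathcal L}(\mathcal T)=\bigcap\{\mathcal V\mid \mathcal T\subseteq i(\mathcal V)\}$, one gets $i(\mathfrak{R}_{\mathcal L}(\mathcal T))=\bigcap\{i(\mathcal V)\mid \mathcal T\subseteq i(\mathcal V)\}\supseteq\mathcal T$, because each term of the meet contains $\mathcal T$. The family is inhabited since the top subobject $\mathcal L$ is $\mathbf{\text{\rsfs{C}}}$-closed, being the image under the meet-preserving $i$ of the empty meet; hence $\mathcal T\subseteq\mathfrak{R}_{\mathcal L}(\mathcal T)$. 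For monotonicity $(C_2)$, I would observe that $\mathcal U\subseteq\mathcal T$ forces the inclusion $\{\mathcal V\mid \mathcal T\subseteq i(\mathcal V)\}\subseteq\{\mathcal V\mid \mathcal U\subseteq i(\mathcal V)\}$ of indexing families, so that the meet computing $\mathfrak{R}_{\mathcal L}(\mathcal U)$ runs over the larger family and is therefore the smaller element, giving $\mathfrak{R}_{\mathcal L}(\mathcal U)\subseteq\mathfrak{R}_{\mathcal L}(\mathcal T)$.

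The clause $(C_3)$ is the only one that is not purely formal, and it is the step I expect to be the crux: it requires $\Phi$ to be $\mathbf{\text{\rsfs{C}}}$-closed. This is exactly where the hypothesis that $\text{\rsfs{C}}$ is a closure operator enters, through its own axiom $(C_3)$, namely $c_{\mathcal L}(\Phi)=\Phi$. Granting that, $\Phi$ lies in $\mathbf{\mathfrak{S}_\text{{\bsifamily{cc}}}(\mathcal L)}^{\mathfrak C}$ and is the least member of the family defining $\mathfrak{R}_{\mathcal L}(\Phi)$, so $\mathfrak{R}_{\mathcal L}(\Phi)=\Phi$. Assembling the three clauses shows $\mathfrak R$ is a closure operator of $\mathbf{CCov}$; as a bonus one may note that idempotency $\mathfrak{R}_{\mathcal L}\circ\mathfrak{R}_{\mathcal L}=\mathfrak{R}_{\mathcal L}$ also holds, again by the Galois adjunction, although it is not required by the definition.
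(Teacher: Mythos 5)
Your proposal is correct and follows essentially the same route as the paper: a direct verification of $(C_1)$, $(C_2)$, $(C_3)$ from the formula $\mathfrak{R}_{\mathcal L}(\mathcal T)=\bigcap\{\mathcal V\mid \mathcal T\subseteq i(\mathcal V)\}$, with idempotency noted as a bonus of the Galois adjunction. The only difference is that you spell out details the paper leaves implicit, in particular that $(C_3)$ rests on $\Phi$ being $\text{\rsfs{C}}$-closed via the axiom $c_{\mathcal L}(\Phi)=\Phi$, where the paper simply writes ``Clearly.''
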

\begin{proof}
Let $\mathcal L$ be an object of $ \mathbf{CCov}$. Then 
 \begin{itemize}
 \item[($C_1$)] $ \mathcal T\subseteq \mathfrak{R}_{\text{\tiny{$\mathcal L$}}}(\mathcal T) $ for all $\mathcal T \in \mathbf{\mathfrak{S}_\text{{\bsifamily{cc}}}(\mathcal L)}$, because $\mathcal T \subseteq \mathcal V$ for all $\mathcal V\in \mathbf{\mathfrak{S}_\text{{\bsifamily{cc}}}(\mathcal L)}$;
 \item[($C_2$)]  If $\mathcal S\subseteq \mathcal T$ in $\mathbf{\mathfrak{S}_\text{{\bsifamily{cc}}}(\mathcal L)}$, then 
 \begin{align*}
\mathfrak{R}_{_\text{\tiny{$\mathcal L$}}}(\mathcal S)&=\bigcap\{\mathcal V\in \mathbf{\mathfrak{S}_\text{{\bsifamily{cc}}}(\mathcal L)}\mid \mathcal S
\subseteq i(\mathcal V)\}\\ &\subseteq \bigcap\{\mathcal V\in \mathbf{\mathfrak{S}_\text{{\bsifamily{cc}}}(\mathcal L)}\mid \mathcal T\subseteq i(\mathcal V)\} \\ &=\mathfrak{R}_{\text{\tiny{$\mathcal L$}}}(\mathcal T);
 \end{align*}
 \item[($C_3$)] Clearly, we have $\mathfrak{R}_{\text{\tiny{$\mathcal L$}}}(\Phi)=\Phi$.
 \end{itemize}
 Additionally, it is interesting to note that 
 \[
 \mathfrak{R}_{\text{\tiny{$\mathcal L$}}}\big( \mathfrak{R}_{_\text{\tiny{$\mathcal L$}}}(\mathcal T)\big)=\bigcap\{\mathcal V\in \mathbf{\mathfrak{S}_\text{{\bsifamily{cc}}}(\mathcal L)}\mid \mathfrak{R}_{\text{\tiny{$\mathcal L$}}}(\mathcal T)\subseteq i(\mathcal V)\} =\mathfrak{R}_{\text{\tiny{$\mathcal L$}}}(\mathcal T);
 \]
in other words, $\mathfrak R$ is an idempotent closure operator of the category  $\mathbf{CCov}$.
\end{proof}
\begin{coro}
$\mathbf{\mathfrak{S}_\text{{\bsifamily{cc}}}(\mathcal L)}^{\mathfrak C}$  is a reflective subcategory of 
$\mathbf{\mathfrak{S}_\text{{\bsifamily{cc}}}(\mathcal L)}$.
\end{coro}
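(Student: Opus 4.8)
The plan is to read reflectivity in the only sense available here: both $\mathbf{\mathfrak{S}_\text{{\bsifamily{cc}}}(\mathcal L)}$ and its subcollection $\mathbf{\mathfrak{S}_\text{{\bsifamily{cc}}}(\mathcal L)}^{\mathfrak C}$ are partially ordered by inclusion, hence thin categories whose only morphisms are the relations $\subseteq$. For such categories a full subcategory is reflective precisely when the inclusion functor $i$ admits a left adjoint, and a left adjoint in the posetal setting is exactly a Galois adjoint. So the whole task reduces to exhibiting $\mathfrak{R}_{\mathcal L}$ as a left adjoint of $i$ and verifying the universal property in order-theoretic form.

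First I would confirm that $\mathfrak{R}_{\mathcal L}(\mathcal T)$ genuinely lands in $\mathbf{\mathfrak{S}_\text{{\bsifamily{cc}}}(\mathcal L)}^{\mathfrak C}$. This is where meet-preservation of $i$ enters: the $\mathbf{\text{\rsfs{C}}}$-closed subobjects are stable under arbitrary intersection, because for any family of fixed points $c_{\mathcal S}(\mathcal V_\alpha)=\mathcal V_\alpha$, monotonicity $(C_2)$ gives $c_{\mathcal S}(\bigcap_\alpha \mathcal V_\alpha)\subseteq \mathcal V_\beta$ for each $\beta$, whence $c_{\mathcal S}(\bigcap_\alpha \mathcal V_\alpha)\subseteq \bigcap_\alpha \mathcal V_\alpha$, while extension $(C_1)$ supplies the reverse inclusion; so $\bigcap_\alpha \mathcal V_\alpha$ is again closed. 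I would stress that this uses only $(C_1)$ and $(C_2)$, not idempotency of $\text{\rsfs{C}}$. Consequently the intersection defining $\mathfrak{R}_{\mathcal L}(\mathcal T)$ is itself $\mathbf{\text{\rsfs{C}}}$-closed, which is exactly the idempotency recorded in the preceding proposition.

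Next I would verify the reflection data. The unit is the inclusion $\mathcal T\subseteq \mathfrak{R}_{\mathcal L}(\mathcal T)$, i.e. property $(C_1)$ applied to $\mathfrak R$. For the universal property, given any $\mathbf{\text{\rsfs{C}}}$-closed $\mathcal V$ together with an arrow $\mathcal T\subseteq i(\mathcal V)$ in the ambient coframe, I observe that $\mathcal V$ belongs to the family $\{\mathcal V'\in \mathbf{\mathfrak{S}_\text{{\bsifamily{cc}}}(\mathcal L)}^{\mathfrak C}\mid \mathcal T\subseteq i(\mathcal V')\}$ over which the defining intersection is taken, so $\mathfrak{R}_{\mathcal L}(\mathcal T)\subseteq \mathcal V$; in posetal terms this is the factoring arrow through the unit, and uniqueness is automatic since there is at most one morphism between any two objects. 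Together these say precisely $\mathcal T\subseteq i(\mathcal V)\iff \mathfrak{R}_{\mathcal L}(\mathcal T)\subseteq \mathcal V$, that is $\mathfrak{R}_{\mathcal L}\dashv i$, and reflectivity follows.

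The only genuinely delicate point is not the order-theoretic bookkeeping but the predicative justification that this left adjoint exists at all: in CZF one cannot form arbitrary intersections impredicatively, so the passage from the meet-preserving inclusion to $\mathfrak{R}_{\mathcal L}$ must be routed through the Galois Adjunction Theorem in CZF (the result cited in the footnote) rather than through a naive ``intersection of all closed subobjects above $\mathcal T$''. I expect that to be the main obstacle: ensuring the indexing family is set-based so that the theorem applies. Once that is secured, the rest is the formal equivalence between a meet-preserving inclusion into a complete lattice and a reflective posetal subcategory.
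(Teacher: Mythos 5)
Your proposal is correct and takes essentially the same route as the paper: reflectivity is read as the existence of a left adjoint to the inclusion $i$, and that adjoint is $\mathfrak{R}_{\mathcal L}$, exactly as in the paper's one-line proof (which simply cites the earlier construction of $\mathfrak{R}_{\mathcal L}$ as the left Galois adjoint of $i$ via the Galois Adjunction Theorem in CZF). The only difference is one of detail: you verify by hand that closed subobjects are stable under meets (so the intersection defining $\mathfrak{R}_{\mathcal L}(\mathcal T)$ is again closed and $i$ genuinely preserves all meets) and you check the adjunction equivalence $\mathcal T\subseteq i(\mathcal V)\Leftrightarrow \mathfrak{R}_{\mathcal L}(\mathcal T)\subseteq\mathcal V$ directly, whereas the paper asserts these facts or imports them from the cited theorem.
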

\begin{proof}
As we have already seen,  for every  $\mathcal L\in \mathbf{CCov}$, the closure map\linebreak $\mathfrak{R}_{_\text{\tiny{$\mathcal L$}}}:\mathbf{\mathfrak{S}_\text{{\bsifamily{cc}}}(\mathcal L)}\rightarrow \mathbf{\mathfrak{S}_\text{{\bsifamily{cc}}}(\mathcal L)}^{\mathfrak C}$ is left adjoint of the inclusion morphism \linebreak $i: \mathbf{\mathfrak{S}_\text{{\bsifamily{cc}}}(\mathcal L)}^{\mathfrak C}\hookrightarrow \mathbf{\mathfrak{S}_\text{{\bsifamily{cc}}}(\mathcal L)}$
\end{proof}

\section{Interior Operators}
We shall be conserned in this section with a version on the category  $\mathbf CCov$ of the interior operator studied in \cite{LO}. 

 \begin{defi}
 An interior operator $I$ of the category   $\mathbf CCov$ is given by a family $I =(i_{\text{\tiny{$\mathcal S$}}})_{\text{$\mathcal S\in \mathbf CCov$}}$ of maps $i_{\text{\tiny{$\mathcal S$}}}:\mathfrak{S}_\text{{\bsifamily{cc}}}(\mathcal S)\rightarrow \mathfrak{S}_\text{{\bsifamily{cc}}}(\mathcal S)$ such that

 \begin{itemize}
 \item[($I_1)$] $\left(\text{Contraction}\right)$\,\  $i_{\text{\tiny{$\mathcal S$}}}(\mathcal T)\subseteq \mathcal T$ for all $\mathcal T \in \mathfrak{S}_\text{{\bsifamily{cc}}}(\mathcal S)$;
 \item[($I_2)$] $\left(\text{Monotonicity}\right)$\,\  If $\mathcal R\subseteq \mathcal T$ in $\mathfrak{S}_\text{{\bsifamily{cc}}}(\mathcal S)$, then $i_{\text{\tiny{$\mathcal S$}}}(\mathcal R)\subseteq i_{\text{\tiny{$\mathcal S$}}}(\mathcal T)$
 \item[($I_3)$] $\left(\text{Upper bound}\right)$\,\  $i_{\text{\tiny{$\mathcal S$}}}(\mathcal S)=\mathcal S$.
 \end{itemize}
 
  \end{defi}
 \begin{defi}
 An $I$-space is a pair $(\mathcal S, i_{\text{\tiny{$\mathcal S$}}})$ where $\mathcal S$ is an object of  $\mathbf CCov$   and $ i_{\text{\tiny{$\mathcal S$}}}$ is an interior operator on $\mathcal S$.
 \end{defi}
 \begin{remark}
 If $\text{\Large \bsifamily{s}} \subseteq S_1 \times S_2$ is a representative of   an equivalence class of relations between $S_1$ and $S_2$ which respect the covers, we donte by
 \begin{itemize}
\item  $\text{\Large \bsifamily{s}}^{-1}$ the inverse of $\text{\Large \bsifamily{s}}$;
\item $\text{\Large \bsifamily{s}}^{\rightarrow}$ the direct image of $\text{\Large \bsifamily{s}}$ defined by $$(\forall X\subset S_{1})( \text{\Large \bsifamily{s}}^{\rightarrow}(X)=\big\{t\in S_{2}\mid \big (\exists s\in X\big)\big( (s,t)\in \text{\Large \bsifamily{s}}\big)\big\}; $$
\item $\text{\Large \bsifamily{s}}^{\leftarrow}$ the inverse  image of $\text{\Large \bsifamily{s}}$ is the direct image of $\text{\Large \bsifamily{s}}^{-1}$.
 \end{itemize}
 \end{remark}
 
 \begin{defi}
 A morphism $\text{\Large \bsifamily{s}}: \mathcal L\rightarrow \mathcal M$ of $\mathbf CCov$ is said to be $I$-continuous if 
 \begin{equation}\label{i-conti}
 \text{\Large \bsifamily{s}}^{\leftarrow}\big( i_{\text{\tiny{$\mathcal M$}}}(\mathcal T)\big) \subseteq i_{\text{\tiny{$\mathcal L$}}}\big(  \text{\Large \bsifamily{s}}^{\leftarrow}(\mathcal T)\big)
 \end{equation}
 for all $\mathcal T \in \mathfrak{S}_\text{{\bsifamily{cc}}}(\mathcal S)$.
 \end{defi}
 
 \begin{prop}
 Let $\text{\Large \bsifamily{s}}:\mathcal L\rightarrow \mathcal M$ and $\text{\Large \bsifamily{t}}:\mathcal M\rightarrow \mathcal N$ be two $I$-continuous morphisms  of   $\mathbf CCov$ then $\text{\Large \bsifamily{t}}\centerdot \text{\Large \bsifamily{s}}$ is an $I$-continuous morphism of   $\mathbf{CCov}$.
 \end{prop}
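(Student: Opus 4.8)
The plan is to mimic the argument already given for the composition of $\text{\rsfs{C}}$-continuous morphisms, but working with inverse images and the reversed inclusions dictated by ($I_1$)--($I_3$). The two structural facts I would rely on are that inverse images compose contravariantly along $\mathbf{CCov}$-morphisms, i.e. $(\text{\Large \bsifamily{t}}\centerdot \text{\Large \bsifamily{s}})^{\leftarrow}=\text{\Large \bsifamily{s}}^{\leftarrow}\centerdot\text{\Large \bsifamily{t}}^{\leftarrow}$, and that each inverse-image map is monotone on the coframe $\mathfrak{S}_\text{{\bsifamily{cc}}}$. With these in hand the proof is a short two-step diagram chase.

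First I would fix $\mathcal T\in \mathfrak{S}_\text{{\bsifamily{cc}}}(\mathcal N)$ and rewrite the left-hand side of the desired inclusion by the contravariant composition law, $(\text{\Large \bsifamily{t}}\centerdot \text{\Large \bsifamily{s}})^{\leftarrow}\big(i_{\text{\tiny{$\mathcal N$}}}(\mathcal T)\big)=\text{\Large \bsifamily{s}}^{\leftarrow}\big(\text{\Large \bsifamily{t}}^{\leftarrow}\big(i_{\text{\tiny{$\mathcal N$}}}(\mathcal T)\big)\big)$. Applying the $I$-continuity (\ref{i-conti}) of $\text{\Large \bsifamily{t}}$ gives $\text{\Large \bsifamily{t}}^{\leftarrow}\big(i_{\text{\tiny{$\mathcal N$}}}(\mathcal T)\big)\subseteq i_{\text{\tiny{$\mathcal M$}}}\big(\text{\Large \bsifamily{t}}^{\leftarrow}(\mathcal T)\big)$, and pushing this through the monotone map $\text{\Large \bsifamily{s}}^{\leftarrow}$ yields $\text{\Large \bsifamily{s}}^{\leftarrow}\big(\text{\Large \bsifamily{t}}^{\leftarrow}(i_{\text{\tiny{$\mathcal N$}}}(\mathcal T))\big)\subseteq \text{\Large \bsifamily{s}}^{\leftarrow}\big(i_{\text{\tiny{$\mathcal M$}}}(\text{\Large \bsifamily{t}}^{\leftarrow}(\mathcal T))\big)$. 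Next I would invoke the $I$-continuity of $\text{\Large \bsifamily{s}}$ applied to the subobject $\text{\Large \bsifamily{t}}^{\leftarrow}(\mathcal T)\in\mathfrak{S}_\text{{\bsifamily{cc}}}(\mathcal M)$, obtaining $\text{\Large \bsifamily{s}}^{\leftarrow}\big(i_{\text{\tiny{$\mathcal M$}}}(\text{\Large \bsifamily{t}}^{\leftarrow}(\mathcal T))\big)\subseteq i_{\text{\tiny{$\mathcal L$}}}\big(\text{\Large \bsifamily{s}}^{\leftarrow}(\text{\Large \bsifamily{t}}^{\leftarrow}(\mathcal T))\big)$. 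Chaining the three inclusions and using the composition law once more on the inner term delivers exactly $(\text{\Large \bsifamily{t}}\centerdot \text{\Large \bsifamily{s}})^{\leftarrow}(i_{\text{\tiny{$\mathcal N$}}}(\mathcal T))\subseteq i_{\text{\tiny{$\mathcal L$}}}\big((\text{\Large \bsifamily{t}}\centerdot \text{\Large \bsifamily{s}})^{\leftarrow}(\mathcal T)\big)$, which is the $I$-continuity of $\text{\Large \bsifamily{t}}\centerdot \text{\Large \bsifamily{s}}$.

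Since $\mathcal T$ was arbitrary, this finishes the argument, and I do not anticipate a genuine obstacle. The only points that deserve care are bookkeeping ones: in contrast to the direct-image version used in the $\text{\rsfs{C}}$-continuous case, the composition law for inverse images is contravariant, so the order of $\text{\Large \bsifamily{s}}^{\leftarrow}$ and $\text{\Large \bsifamily{t}}^{\leftarrow}$ is reversed; and it is precisely the monotonicity of $\text{\Large \bsifamily{s}}^{\leftarrow}$ on $\mathfrak{S}_\text{{\bsifamily{cc}}}$ that allows me to transport the first inclusion through $\text{\Large \bsifamily{s}}^{\leftarrow}$ before invoking the $I$-continuity of $\text{\Large \bsifamily{s}}$.
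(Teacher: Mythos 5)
Your proof is correct and follows essentially the same route as the paper's: apply the $I$-continuity of $\text{\Large \bsifamily{t}}$, push the resulting inclusion through the monotone map $\text{\Large \bsifamily{s}}^{\leftarrow}$, apply the $I$-continuity of $\text{\Large \bsifamily{s}}$ to $\text{\Large \bsifamily{t}}^{\leftarrow}(\mathcal T)$, and identify $(\text{\Large \bsifamily{t}}\centerdot\text{\Large \bsifamily{s}})^{\leftarrow}=\text{\Large \bsifamily{s}}^{\leftarrow}\centerdot\text{\Large \bsifamily{t}}^{\leftarrow}$. You are in fact slightly more careful than the paper in making the contravariant composition law explicit, but the argument is the same.
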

 \begin{proof}
 Since $\text{\Large \bsifamily{t}}:\mathcal M\rightarrow \mathcal N$ is $I$-continuous, we have  $$\text{\Large \bsifamily{t}}^{\leftarrow}\big( i_{\text{\tiny{$\mathcal N$}}}(\mathcal W)\big)\subseteq i_{\text{\tiny{$\mathcal M$}}}\big( \text{\Large \bsifamily{t}}^{\leftarrow}(\mathcal W)\big)$$
  for all $\mathcal T \in \mathfrak{S}_\text{{\bsifamily{cc}}}(\mathcal N)$, it follows that
   $$\text{\Large \bsifamily{s}}^{\leftarrow}\Big(\text{\Large \bsifamily{t}}^{\leftarrow}\big(( i_{\text{\tiny{$\mathcal N$}}}(\mathcal W)\big)\Big)\subseteq \text{\Large \bsifamily{s}}^{\leftarrow}\Big( i_{\text{\tiny{$\mathcal M$}}}\big( \text{\Large \bsifamily{t}}^{\leftarrow}(\mathcal W)\big)\Big);$$
  now,  by the $I$-continuity of $\text{\Large \bsifamily{s}}$,
 $$
\text{\Large \bsifamily{s}}^{\leftarrow}\Big( i_{\text{\tiny{$\mathcal M$}}} \big(\text{\Large \bsifamily{t}}^{\leftarrow}(\mathcal W)\big)\Big)\subseteq i_{\text{\tiny{$\mathcal L$}}}\Big( \text{\Large \bsifamily{s}}^{\leftarrow}\big(\text{\Large \bsifamily{t}}^{\leftarrow}(\mathcal W)\big)\Big),
 $$
  
therefore $$\text{\Large \bsifamily{s}}^{\leftarrow}\Big[\text{\Large \bsifamily{t}}^{\leftarrow}\big( i_{\text{\tiny{$\mathcal N$}}}(\mathcal W)\big)\Big)\subseteq i_{\text{\tiny{$\mathcal L$}}}\Big( \text{\Large \bsifamily{s}}^{\leftarrow}\big[\text{\Large \bsifamily{t}}^{\leftarrow}(\mathcal W)\big)\Big),$$
  that is to say 
  $$(\text{\Large \bsifamily{t$\centerdot$ s}})^{\leftarrow}\big( i_{\text{\tiny{$\mathcal N$}}}(\mathcal W)\Big)\subseteq i_{\text{\tiny{$\mathcal L$}}}\Big( (\text{\Large \bsifamily{t$\centerdot$ s}})^{\leftarrow}(\mathcal W)\Big)$$
\end{proof}
  As a consequence we obtain
 \begin{defi}
 The category $\mathbf{I}\text{-}\mathbf CCov$ of $I$-spaces comprises the following data:
 \begin{enumerate}
 \item {\bf Objects}: Pairs $(\mathcal S, i_{\text{\tiny{$\mathcal S$}}})$ where $\mathcal S$ is an object of  $\mathbf CCov$   and $ i_{\text{\tiny{$\mathcal S$}}}$ is an interior operator on $\mathcal S$.
\item {\bf Morphisms}: Morphisms of $\mathbf CCov$ which are $I$-continuous.
 \end{enumerate}
 \end{defi}

 \subsection{The lattice structure of all interior operators}
 For the category $\mathbf CCov$ we consider the collection
 \[
 Int(\mathbf{\mathbf CCov})
 \]
 of all  interior operators on $\mathbf CCov$. It is ordered by
\[
 I\leqslant J \Leftrightarrow i_{\text{\tiny{$\mathcal S$}}}(\mathcal T)\subseteq j_{\text{\tiny{$\mathcal S$}}}(\mathcal T), \,\,\ \text{for all $\mathcal T \in \mathfrak{S}_\text{{\bsifamily{cc}}}(\mathcal S)$  and all $\mathcal S$  object of $\mathbf CCov$. }
\]

 This way $Int(\mathbf{\mathbf CCov})$ inherits a lattice structure from $\mathfrak{S}_\text{{\bsifamily{cc}}}(\mathcal S)$:
 
 \begin{prop}
 Every family $(I_{\text{\tiny{$\lambda$}}})_{\text{\tiny{$\lambda\in \Lambda$}}}$ in $ Int(\mathbf{\mathbf CCov})$ has a join $\bigvee\limits_{\text{\tiny{$\lambda\in \Lambda $}}}I_{\text{\tiny{$\lambda $}}}$ and a meet $\bigwedge\limits_{\text{\tiny{$\lambda\in \Lambda $}}}I_{\text{\tiny{$\lambda $}}}$ in $ Int(\mathbf{\mathbf CCov})$. The discrete interior operator
 \[ I_{\text{\tiny{$D$}}}=({i_{\text{\tiny{$D$}}}} _{\text{\tiny{$\mathcal S$}}})_{\text{$\mathcal S\in \mathbf CCov$}}\,\,\ \text{with}\,\,\
 {i_{\text{\tiny{$D$}}}}_{\text{\tiny{$\mathcal S$}}}(\mathcal T)=
 \mathcal T\,\,\ \text{for all} \,\ \mathcal T \in {\mathfrak S}_\text{{\bsifamily{cc}}}(\mathcal S)
 \]
  is the largest element in $Int(\mathbf CCov)$, and the trivial interior operator
  \[ 
I_{\text{\tiny{$ T$}}}=({i_{\text{\tiny{$ T$}}}}_\text{\tiny{$\mathcal S$}})_{\text{$\mathcal S\in \mathbf CCov$}}\,\,\ \text{with}\,\,\ {i_{\text{\tiny{$T$}}}}_{\text{\tiny{$\mathcal S$}}}(\mathcal T)=
  \begin{cases}
\Phi& \text{for all}\,\ \mathcal T\in {\mathfrak S}_\text{{\bsifamily{cc}}}(\mathcal S)\\
\mathcal S&\text {if}\,\ \mathcal T=\mathcal S
\end{cases}
 \]
  is the least one.
 \end{prop}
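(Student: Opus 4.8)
The plan is to dualize, step for step, the verification already carried out for the lattice $\text{\rsfs C}l(\mathbf{CCov})$ of closure operators. Since the order on $Int(\mathbf{CCov})$ is defined componentwise and $\mathfrak{S}_\text{{\bsifamily{cc}}}(\mathcal S)$ is a coframe — in particular a complete lattice — for every object $\mathcal S$, all the joins and meets written below are to be computed pointwise inside $\mathfrak{S}_\text{{\bsifamily{cc}}}(\mathcal S)$. Thus the only real work is to check that a pointwise join (resp. meet) of interior operators is again an interior operator, after which its being the supremum (resp. infimum) in $Int(\mathbf{CCov})$ is automatic.

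First I would build the join. Putting $\widehat{I}=\bigvee_{\lambda\in\Lambda}I_{\lambda}$ with components $\widehat{i_{\mathcal S}}(\mathcal T)=\bigvee_{\lambda\in\Lambda}{i_{\lambda}}_{\mathcal S}(\mathcal T)$, I would check $(I_1)$–$(I_3)$ in turn: for $(I_1)$, each ${i_{\lambda}}_{\mathcal S}(\mathcal T)\subseteq\mathcal T$, so $\mathcal T$ is an upper bound of the family and the join lies below it; $(I_2)$ is inherited from the monotonicity of each ${i_{\lambda}}_{\mathcal S}$ together with the monotonicity of $\bigvee$; and $(I_3)$ holds because ${i_{\lambda}}_{\mathcal S}(\mathcal S)=\mathcal S$ for every $\lambda$. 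As $\widehat{i_{\mathcal S}}$ is by construction the pointwise supremum of the family, it is its least upper bound in $Int(\mathbf{CCov})$.

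Dually, for $\check{I}=\bigwedge_{\lambda\in\Lambda}I_{\lambda}$ with $\check{i_{\mathcal S}}(\mathcal T)=\bigwedge_{\lambda\in\Lambda}{i_{\lambda}}_{\mathcal S}(\mathcal T)$, axioms $(I_1)$ and $(I_2)$ follow because a meet sits below every member of the family and $\bigwedge$ is monotone, while $(I_3)$ holds since every component fixes $\mathcal S$. For the extremal operators, $I_D$ is simply the identity family, so $(I_1)$–$(I_3)$ are immediate, and for any interior operator $I$ axiom $(I_1)$ yields $i_{\mathcal S}(\mathcal T)\subseteq\mathcal T={i_D}_{\mathcal S}(\mathcal T)$, giving $I\leqslant I_D$; hence $I_D$ is the top.

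Finally I would treat $I_T$. Axioms $(I_1)$ and $(I_3)$ read off directly from the case split ($\Phi\subseteq\mathcal T$ always, and $\mathcal S\subseteq\mathcal S$), and minimality follows since ${i_T}_{\mathcal S}(\mathcal T)=\Phi\subseteq i_{\mathcal S}(\mathcal T)$ for $\mathcal T\neq\mathcal S$ while ${i_T}_{\mathcal S}(\mathcal S)=\mathcal S=i_{\mathcal S}(\mathcal S)$ by $(I_3)$. The one step needing care — and the only genuine obstacle in an otherwise routine dualization — is monotonicity $(I_2)$ for $I_T$: here I would observe that if $\mathcal R\subseteq\mathcal T$ with $\mathcal R=\mathcal S$, then necessarily $\mathcal T=\mathcal S$, because $\mathcal S$ is the top of $\mathfrak{S}_\text{{\bsifamily{cc}}}(\mathcal S)$, so the value $\mathcal S$ is never forced to lie below $\Phi$; all remaining cases reduce to $\Phi\subseteq\Phi$, $\Phi\subseteq\mathcal S$, or $\mathcal S\subseteq\mathcal S$.
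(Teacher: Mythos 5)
Your proposal is correct and follows essentially the same route as the paper: pointwise joins and meets computed in the complete lattice $\mathfrak{S}_\text{{\bsifamily{cc}}}(\mathcal S)$, with the axioms $(I_1)$--$(I_3)$ verified componentwise. In fact you are more thorough than the paper, which only checks the join explicitly and dismisses the meet, $I_D$, $I_T$, and the extremality claims with ``similarly''; your careful handling of monotonicity for $I_T$ (using that $\mathcal R=\mathcal S$ forces $\mathcal T=\mathcal S$) is exactly the one case that genuinely needs the extra argument.
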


 \begin{proof}
For $\Lambda\ne\emptyset$, let $\widehat{I}=\bigvee\limits_{\text{\tiny{$\lambda\in\Lambda $}}}I_{\text{\tiny{$\lambda $}}}$, then 
 \[
 \widehat{i_{\text{\tiny{$\mathcal S$}}}}=\bigvee\limits_{\text{\tiny{$\lambda\in \Lambda$}}} {i_{\text{\tiny{$\lambda $}}}}_{\text{\tiny{$\mathcal S$}}},
 \]
 for all  $\mathcal S$ object of $\mathbf CCov$, satisfies
\begin{itemize}
 \item $ \widehat{i_{\text{\tiny{$\mathcal S$}}}}(\mathcal T)\subseteq \mathcal T$,  because ${i_{\text{\tiny{$\lambda $}}}}_{\text{\tiny{$\mathcal S$}}}(\mathcal T)\subseteq \mathcal T$ for all $\mathcal T\in {\mathfrak S}_\text{{\bsifamily{cc}}}$ and for all $\lambda \in \Lambda$.
\item If $\mathcal R\leqslant \mathcal T$ in ${\mathfrak S}_\text{{\bsifamily{cc}}}$ then ${i_{\text{\tiny{$\lambda $}}}}_{\text{\tiny{$\mathcal S$}}}(\mathcal R)\subseteq {i_{\text{\tiny{$\lambda $}}}}_{\text{\tiny{$\mathcal S$}}}(\mathcal T)$  for all $\lambda \in \Lambda$, therefore $ \widehat{i_{\text{\tiny{$\mathcal S$}}}}(\mathcal R)\subseteq \widehat{i_{\text{\tiny{$\mathcal S$}}}}(\mathcal T)$.
 \item Since ${i_{\text{\tiny{$\lambda $}}}}_{\text{\tiny{$\mathcal S$}}}(\mathcal S)=\mathcal S $  for all $\lambda \in \Lambda$, we have that  $ \widehat{i_{\text{\tiny{$\mathcal S$}}}}(\mathcal S)=\mathcal S$.
 \end{itemize}
Similarly  $\bigwedge\limits_{\text{\tiny{$\lambda\in \Lambda $}}}I_{\text{\tiny{$\lambda $}}}$,\,\  $ I_{\text{\tiny{$D$}}}$ and $I_{\text{\tiny{$T$}}}$ are interior operators.
 \end{proof}
 \begin{coro}\label{i-complete}
  For  every  object $\mathcal S$ of $\mathbf CCov$
  \[
  Int(\mathcal S) = \{i_{\text{\tiny{$\mathcal S$}}}\mid i_{\text{\tiny{$\mathcal S$}}}\,\ \text{ is an interior operator on}\,\ \mathcal S\}
  \]
  is a complete lattice.
 \end{coro}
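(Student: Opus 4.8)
The plan is to obtain the corollary by specialising the objectwise constructions of the preceding Proposition to the single fixed object $\mathcal S$. First I would note that the global order on $Int(\mathbf{CCov})$ restricts to an order on $Int(\mathcal S)$, namely $i_{\text{\tiny{$\mathcal S$}}}\leqslant j_{\text{\tiny{$\mathcal S$}}}$ precisely when $i_{\text{\tiny{$\mathcal S$}}}(\mathcal T)\subseteq j_{\text{\tiny{$\mathcal S$}}}(\mathcal T)$ for every $\mathcal T\in\mathfrak{S}_\text{{\bsifamily{cc}}}(\mathcal S)$; reflexivity, antisymmetry and transitivity are inherited pointwise from the partial order of the coframe $\mathfrak{S}_\text{{\bsifamily{cc}}}(\mathcal S)$, so $Int(\mathcal S)$ is a poset. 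Since a poset in which every subset admits a supremum is automatically a complete lattice, it suffices to produce arbitrary joins.

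For a non-empty family $(I_{\text{\tiny{$\lambda$}}})_{\text{\tiny{$\lambda\in\Lambda$}}}$ in $Int(\mathcal S)$, with components ${i_{\text{\tiny{$\lambda $}}}}_{\text{\tiny{$\mathcal S$}}}$, I would set $\widehat{i_{\text{\tiny{$\mathcal S$}}}}(\mathcal T):=\bigvee_{\lambda}{i_{\text{\tiny{$\lambda $}}}}_{\text{\tiny{$\mathcal S$}}}(\mathcal T)$, the join being formed in the complete lattice $\mathfrak{S}_\text{{\bsifamily{cc}}}(\mathcal S)$. The three required properties are exactly those verified in the preceding Proposition, now read off for the single object $\mathcal S$: contraction holds because each ${i_{\text{\tiny{$\lambda $}}}}_{\text{\tiny{$\mathcal S$}}}(\mathcal T)\subseteq\mathcal T$ forces $\bigvee_{\lambda}{i_{\text{\tiny{$\lambda $}}}}_{\text{\tiny{$\mathcal S$}}}(\mathcal T)\subseteq\mathcal T$; monotonicity is inherited since $\mathcal R\subseteq\mathcal T$ gives ${i_{\text{\tiny{$\lambda $}}}}_{\text{\tiny{$\mathcal S$}}}(\mathcal R)\subseteq{i_{\text{\tiny{$\lambda $}}}}_{\text{\tiny{$\mathcal S$}}}(\mathcal T)\subseteq\widehat{i_{\text{\tiny{$\mathcal S$}}}}(\mathcal T)$ for every $\lambda$; and the upper-bound axiom holds because $\widehat{i_{\text{\tiny{$\mathcal S$}}}}(\mathcal S)=\bigvee_{\lambda}\mathcal S=\mathcal S$. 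By construction $\widehat{i_{\text{\tiny{$\mathcal S$}}}}$ is the least upper bound of the family in $Int(\mathcal S)$, so all non-empty joins exist.

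It remains to treat the empty family, which is the one genuinely delicate point and where I expect the main obstacle to lie. The supremum of the empty subset is forced to be the least element of $Int(\mathcal S)$, and it cannot be obtained by forming pointwise extrema: the naive pointwise meet would assign the top $\mathcal S$ to every argument and so violate contraction $(I_1)$. Instead I would invoke the explicit least and greatest elements furnished by the preceding Proposition: the trivial operator $I_{\text{\tiny{$T$}}}$ serves as the supremum of the empty family and the discrete operator $I_{\text{\tiny{$D$}}}$ as its infimum, each restricted to $\mathcal S$. With arbitrary suprema now available, $Int(\mathcal S)$ is a complete lattice, its meets being recovered pointwise for non-empty families and as $I_{\text{\tiny{$D$}}}$ for the empty one. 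The substance of the argument is therefore entirely supplied by the Proposition; the only care needed is to resist taking pointwise extrema in the boundary case and to appeal instead to the named top and bottom operators.
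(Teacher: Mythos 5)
Your proof is correct and takes essentially the same route as the paper, which offers no separate argument for this corollary but treats it as an immediate specialization of the preceding Proposition to the single object $\mathcal S$: non-empty joins and meets of interior maps are formed pointwise in the complete lattice $\mathfrak{S}_{\mathrm{cc}}(\mathcal S)$, and the discrete and trivial operators $I_{D}$, $I_{T}$ supply the greatest and least elements. Your explicit handling of the empty family --- correctly taking its supremum to be $I_{T}$ and its infimum to be $I_{D}$ rather than pointwise extrema, which would violate $(I_{3})$ and $(I_{1})$ respectively --- merely makes precise a boundary case the paper leaves implicit.
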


 \subsection{Initial interior operators}
Let  $\mathbf{I}\text{-}\mathbf CCov$ be the category of $I$-spaces. Let  $(M,i_{\text{\tiny{$\mathcal M$}}})$ be an object of  $\mathbf{I}\text{-}\mathbf CCov$ and let $\mathcal L$ be an object of $\mathbf CCov$. For each morphism  $\text{\large \bsifamily{s}}:\mathcal L\rightarrow \mathbf M$ in $\mathbf CCov$ we define on $\mathcal L$ the operator 
\vspace{0.5cm}

\begin{center}
\begin{tikzpicture}[scale=0.8]
\node (A) at (0,3) {$\mathfrak{S}_\text{{\bsifamily{cc}}}(\mathcal L)$};
\node (B) at (3,3) {$\mathfrak{S}_\text{{\bsifamily{cc}}}(\mathcal M)$};
\node (C) at (0,0) {$\mathfrak{S}_\text{{\bsifamily{cc}}}(\mathcal L)$};
\node (D) at (3,0) {$\mathfrak{S}_\text{{\bsifamily{cc}}}(\mathcal M)$};
\draw[->] (A) -- (B);
\draw[->, dashed] (A) -- (C);
\draw[->] (B) -- (D);
\draw[->] (D) -- (C);
\node at (1.5,3.3) {$ \text{\large \bsifamily{s}}^{\rightarrow} $};
\node at (1.5,-0.3) {$ \text{\large \bsifamily{s}}^{\leftarrow} $};
\node at (3.4,1.5) {$ i_{\text{\tiny{$\mathcal M$}}} $};
\node at (-0.4,1.5) {$ i_{\text{\tiny{${\mathcal L}_{ \text{\tiny \bsifamily{s}}}$}}}$};
\end{tikzpicture}
\end{center}

\begin{equation} \label{i-initial}
 i_{\text{\tiny{${\mathcal L}_{ \text{\tiny \bsifamily{s}}}$}}}:=\text{\large \bsifamily{s}}^{\leftarrow} \centerdot i_{\text{\tiny{$\mathcal M$}}}\centerdot \text{\large \bsifamily{s}}^{\rightarrow} .
\end{equation}
\begin{prop}\label{inic-cont}
The operator (\ref{i-initial}) is an interior operator on $\mathcal L$ for which the morphism $\text{\large \bsifamily{s}}$ is $I$-continuous.
\end{prop}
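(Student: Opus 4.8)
The plan is to verify the three interior-operator axioms $(I_1)$--$(I_3)$ for the composite $i_{\text{\tiny{${\mathcal L}_{ \text{\tiny \bsifamily{s}}}$}}}=\text{\large \bsifamily{s}}^{\leftarrow}\centerdot i_{\text{\tiny{$\mathcal M$}}}\centerdot\text{\large \bsifamily{s}}^{\rightarrow}$ defined in $(\ref{i-initial})$, and then to read off the $I$-continuity inequality $(\ref{i-conti})$, in close analogy with Proposition~\ref{c-ini-cont}. The three ingredients I will repeatedly call on are the monotonicity of each of the three factors $\text{\large \bsifamily{s}}^{\rightarrow}$, $i_{\text{\tiny{$\mathcal M$}}}$, $\text{\large \bsifamily{s}}^{\leftarrow}$ on the subobject coframes, the two nontrivial axioms of $i_{\text{\tiny{$\mathcal M$}}}$, and the adjoint relationship between the image operators of $\text{\large \bsifamily{s}}$ on $\mathfrak{S}_\text{{\bsifamily{cc}}}$.

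Monotonicity $(I_2)$ is immediate and I would dispatch it first: if $\mathcal R\subseteq\mathcal T$ then $\text{\large \bsifamily{s}}^{\rightarrow}(\mathcal R)\subseteq\text{\large \bsifamily{s}}^{\rightarrow}(\mathcal T)$, and composing with the monotone maps $i_{\text{\tiny{$\mathcal M$}}}$ and then $\text{\large \bsifamily{s}}^{\leftarrow}$ preserves this inclusion, giving $i_{\text{\tiny{${\mathcal L}_{ \text{\tiny \bsifamily{s}}}$}}}(\mathcal R)\subseteq i_{\text{\tiny{${\mathcal L}_{ \text{\tiny \bsifamily{s}}}$}}}(\mathcal T)$. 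For the upper bound $(I_3)$ I would evaluate the composite at the top object $\mathcal L$: the morphism condition $S_1\lhd_1\text{\large \bsifamily{s}}^{-}S_2$ of Section~1 says exactly that $\text{\large \bsifamily{s}}^{\leftarrow}$ sends the top $\mathcal M$ to the top $\mathcal L$, and, combined with the fact that $\text{\large \bsifamily{s}}^{\rightarrow}$ carries $\mathcal L$ to the top of $\mathfrak{S}_\text{{\bsifamily{cc}}}(\mathcal M)$ together with the upper-bound axiom $i_{\text{\tiny{$\mathcal M$}}}(\mathcal M)=\mathcal M$, this chains to $i_{\text{\tiny{${\mathcal L}_{ \text{\tiny \bsifamily{s}}}$}}}(\mathcal L)=\text{\large \bsifamily{s}}^{\leftarrow}\big(i_{\text{\tiny{$\mathcal M$}}}(\text{\large \bsifamily{s}}^{\rightarrow}(\mathcal L))\big)=\text{\large \bsifamily{s}}^{\leftarrow}(\mathcal M)=\mathcal L$. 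I note already that the top-preservation of $\text{\large \bsifamily{s}}^{\rightarrow}$ is a genuine extra input here, not a formality.

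The contraction axiom $(I_1)$ is where the real content lies, and I expect it to be the main obstacle. From contractivity of $i_{\text{\tiny{$\mathcal M$}}}$ we obtain $i_{\text{\tiny{$\mathcal M$}}}\big(\text{\large \bsifamily{s}}^{\rightarrow}(\mathcal T)\big)\subseteq\text{\large \bsifamily{s}}^{\rightarrow}(\mathcal T)$, and applying the monotone $\text{\large \bsifamily{s}}^{\leftarrow}$ yields $i_{\text{\tiny{${\mathcal L}_{ \text{\tiny \bsifamily{s}}}$}}}(\mathcal T)\subseteq\text{\large \bsifamily{s}}^{\leftarrow}\big(\text{\large \bsifamily{s}}^{\rightarrow}(\mathcal T)\big)$. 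To conclude $i_{\text{\tiny{${\mathcal L}_{ \text{\tiny \bsifamily{s}}}$}}}(\mathcal T)\subseteq\mathcal T$ I must then supply the counit-type inequality $\text{\large \bsifamily{s}}^{\leftarrow}\big(\text{\large \bsifamily{s}}^{\rightarrow}(\mathcal T)\big)\subseteq\mathcal T$ in $\mathfrak{S}_\text{{\bsifamily{cc}}}(\mathcal L)$. This is precisely the order-dual of the unit inequality $\mathcal T\subseteq\text{\large \bsifamily{s}}^{\leftarrow}\big(\text{\large \bsifamily{s}}^{\rightarrow}(\mathcal T)\big)$ that powered the extension step $(C_1)$ in the closure case, so the argument is \emph{not} a purely formal mirror of Proposition~\ref{c-ini-cont}: it hinges on having $\text{\large \bsifamily{s}}^{\leftarrow}$ \emph{left} adjoint to $\text{\large \bsifamily{s}}^{\rightarrow}$ on the subobject coframes, rather than right adjoint as the closure proof implicitly used. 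Securing this adjunction in the quotient coframe $\mathfrak{S}_\text{{\bsifamily{cc}}}$, where the relevant images must be read modulo $=_{\lhd}$ and not as naive set-theoretic existential images, is the delicate point I would treat most carefully.

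Finally, the $I$-continuity of $\text{\large \bsifamily{s}}$ would follow from the same adjunction: for $\mathcal U\in\mathfrak{S}_\text{{\bsifamily{cc}}}(\mathcal M)$ the unit inequality $\mathcal U\subseteq\text{\large \bsifamily{s}}^{\rightarrow}\big(\text{\large \bsifamily{s}}^{\leftarrow}(\mathcal U)\big)$, together with monotonicity of $i_{\text{\tiny{$\mathcal M$}}}$ and of $\text{\large \bsifamily{s}}^{\leftarrow}$, gives
\[
\text{\large \bsifamily{s}}^{\leftarrow}\big(i_{\text{\tiny{$\mathcal M$}}}(\mathcal U)\big)\subseteq \text{\large \bsifamily{s}}^{\leftarrow}\Big(i_{\text{\tiny{$\mathcal M$}}}\big(\text{\large \bsifamily{s}}^{\rightarrow}(\text{\large \bsifamily{s}}^{\leftarrow}(\mathcal U))\big)\Big)=i_{\text{\tiny{${\mathcal L}_{ \text{\tiny \bsifamily{s}}}$}}}\big(\text{\large \bsifamily{s}}^{\leftarrow}(\mathcal U)\big),
\]
which is exactly $(\ref{i-conti})$. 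Since both $(I_1)$ and continuity feed on the same adjunction inequalities, once that adjunction is pinned down in $\mathfrak{S}_\text{{\bsifamily{cc}}}$ the remaining steps are the routine bookkeeping already rehearsed for the closure operator.
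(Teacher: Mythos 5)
Your proposal is essentially the paper's own proof: the same axiom-by-axiom verification of $(I_1)$--$(I_3)$ for the composite $\text{\large \bsifamily{s}}^{\leftarrow}\centerdot i_{\text{\tiny{$\mathcal M$}}}\centerdot\text{\large \bsifamily{s}}^{\rightarrow}$, and the identical unit-inequality chain $\text{\large \bsifamily{s}}^{\leftarrow}\big(i_{\text{\tiny{$\mathcal M$}}}(\mathcal T)\big)\subseteq\text{\large \bsifamily{s}}^{\leftarrow}\big(i_{\text{\tiny{$\mathcal M$}}}\centerdot\text{\large \bsifamily{s}}^{\rightarrow}\centerdot\text{\large \bsifamily{s}}^{\leftarrow}(\mathcal T)\big)=i_{\text{\tiny{${\mathcal L}_{ \text{\tiny \bsifamily{s}}}$}}}\big(\text{\large \bsifamily{s}}^{\leftarrow}(\mathcal T)\big)$ for $I$-continuity. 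The only difference is one of candor: the two steps you flag as genuine extra inputs --- the counit-type inequality $\text{\large \bsifamily{s}}^{\leftarrow}\big(\text{\large \bsifamily{s}}^{\rightarrow}(\mathcal T)\big)\subseteq\mathcal T$ needed for $(I_1)$ and the top-preservation of $\text{\large \bsifamily{s}}^{\rightarrow}$ needed for $(I_3)$ --- are exactly the inclusions the paper's proof asserts silently and without justification, so your version follows the same route while being more explicit about where its real content (and its risk) lies.
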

\begin{proof}\
\begin{enumerate}
\item[($I_1)$] $\left(\text{Contraction}\right)$\,\  $i_{\text{\tiny{${\mathcal L}_{ \text{\tiny \bsifamily{s}}}$}}}(\mathcal T)= \text{\large \bsifamily{s}}^{\leftarrow} \centerdot i_{\text{\tiny{$\mathcal M$}}}\centerdot \text{\large \bsifamily{s}}^{\rightarrow}(\mathcal T)\subseteq \text{\large \bsifamily{s}}^{\leftarrow} \centerdot  \text{\large \bsifamily{s}}^{\rightarrow}(\mathcal T)\subseteq \mathcal T$ for all $\mathcal T \in \mathfrak{S}_\text{{\bsifamily{cc}}}(\mathcal S)$ ;

 \item[($I_2)$] $\left(\text{Monotonicity}\right)$\,\   $\mathcal R\subseteq \mathcal T$ in $\mathfrak{S}_\text{{\bsifamily{cc}}}(\mathcal S)$, implies $\text{\large \bsifamily{s}}^{\rightarrow}(\mathcal R)\subseteq\text{\large \bsifamily{s}}^{\rightarrow}(\mathcal T)$, then $i_{\text{\tiny{$\mathcal M$}}}\centerdot \text{\large \bsifamily{s}}^{\rightarrow}(\mathcal R)\subseteq i_{\text{\tiny{$\mathcal M$}}}\centerdot \text{\large \bsifamily{s}}^{\rightarrow}(\mathcal T)$, consequently  $ \text{\large \bsifamily{s}}^{\leftarrow} \centerdot i_{\text{\tiny{$\mathcal M$}}}\centerdot \text{\large \bsifamily{s}}^{\rightarrow}(\mathcal R)\subseteq \text{\large \bsifamily{s}}^{\leftarrow} \centerdot i_{\text{\tiny{$\mathcal M$}}}\centerdot \text{\large \bsifamily{s}}^{\rightarrow}(\mathcal T)$;
 
 \item[($I_3)$] $\left(\text{Upper bound}\right)$\,\  $i_{\text{\tiny{${\mathcal L}_{ \text{\tiny \bsifamily{s}}}$}}}(\mathcal S)=\text{\large \bsifamily{s}}^{\leftarrow} \centerdot i_{\text{\tiny{$\mathcal M$}}}\centerdot \text{\large \bsifamily{s}}^{\rightarrow}(\mathcal S)=\mathcal S$.
\end{enumerate}
Finally,
\begin{align*}
\text{\large \bsifamily{s}}^{\leftarrow} \big( i_{\text{\tiny{$\mathcal M$}}}
(\mathcal T)\big)&\subseteq\text{\large \bsifamily{s}}^{\leftarrow} \big( i_{\text{\tiny{$\mathcal M$}}}\centerdot \text{\large \bsifamily{s}}^{\rightarrow}\centerdot \text{\large \bsifamily{s}}^{\leftarrow}(\mathcal T)\big)\\
&=\big(\text{\large \bsifamily{s}}^{\leftarrow} \centerdot i_{\text{\tiny{$\mathcal M$}}}\centerdot \text{\large \bsifamily{s}}^{\rightarrow}\big)\centerdot \text{\large \bsifamily{s}}^{\leftarrow}(\mathcal T)\\
&= i_{\text{\tiny{$L_{f}$}}}\big(f^{-1}(T)\big),
\end{align*}
 for all $\mathcal T\in {\mathfrak S}_\text{{\bsifamily{cc}}}(\mathcal S)$.
\end{proof}

It is clear that $  i_{\text{\tiny{${\mathcal L}_{ \text{\tiny \bsifamily{s}}}$}}}$ is the coarsest interior operator on $\mathcal L$ for which the morphism $\text{\large \bsifamily{s}}$ is $I$-continuous; more precisaly

\begin{prop}\label{i-unique}
Let $(\mathcal N,i_{\text{\tiny{$\mathcal N$}}})$ and $(\mathcal M,i_{\text{\tiny{$\mathcal M$}}})$ be objects of  $\mathbf{I}\text{-}\mathbf CCov$ , and let $\mathcal L$ be an object of  $\mathbf CCov$. For each morphism  $\text{\large \bsifamily{t}}:\mathcal N\rightarrow \mathcal L$ in  $\mathbf CCov$ and for $\text{\large \bsifamily{s}}:(\mathcal L,i_{\text{\tiny{${\mathcal L}_{ \text{\tiny \bsifamily{s}}}$}}})\rightarrow (\mathcal M,i_{\text{\tiny{$\mathcal M$}}})$ an $I$-continuous morphism, $\text{\large \bsifamily{t}}$  is $I$-continuous if and only if $\text{\large \bsifamily{s}}\centerdot \text{\large \bsifamily{t}}$ is $I$-continuous.
\end{prop}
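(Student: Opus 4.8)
The plan is to prove the biconditional by treating its two implications with different tools. The ``only if'' direction follows formally from closure of $I$-continuous morphisms under composition, while the ``if'' direction requires unwinding the definition of the initial interior operator $i_{\mathcal L_{\text{\bsifamily{s}}}}=\text{\bsifamily{s}}^{\leftarrow}\centerdot i_{\mathcal M}\centerdot\text{\bsifamily{s}}^{\rightarrow}$ supplied by Proposition~\ref{inic-cont}.

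For the ``only if'' direction I would regard $\text{\bsifamily{t}}$ as a morphism $(\mathcal N,i_{\mathcal N})\to(\mathcal L,i_{\mathcal L_{\text{\bsifamily{s}}}})$ of $\mathbf{I}\text{-}\mathbf CCov$. Since Proposition~\ref{inic-cont} already guarantees that $\text{\bsifamily{s}}\colon(\mathcal L,i_{\mathcal L_{\text{\bsifamily{s}}}})\to(\mathcal M,i_{\mathcal M})$ is $I$-continuous, and $I$-continuous morphisms are closed under composition (the proposition on composites of $I$-continuous morphisms proved earlier in this section), I would conclude immediately that $\text{\bsifamily{s}}\centerdot\text{\bsifamily{t}}$ is $I$-continuous. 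This direction requires no computation.

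For the converse I would assume that $\text{\bsifamily{s}}\centerdot\text{\bsifamily{t}}$ is $I$-continuous, and first record two standard facts: inverse image is contravariantly functorial, so $(\text{\bsifamily{s}}\centerdot\text{\bsifamily{t}})^{\leftarrow}=\text{\bsifamily{t}}^{\leftarrow}\centerdot\text{\bsifamily{s}}^{\leftarrow}$; and the counit inclusion $\text{\bsifamily{s}}^{\leftarrow}\big(\text{\bsifamily{s}}^{\rightarrow}(\mathcal T)\big)\subseteq\mathcal T$, which is the very adjunction already used to verify ($I_1$) in Proposition~\ref{inic-cont}. Then for each $\mathcal T\in\mathfrak{S}_\text{{\bsifamily{cc}}}(\mathcal L)$ I would compute
\[
\text{\bsifamily{t}}^{\leftarrow}\big(i_{\mathcal L_{\text{\bsifamily{s}}}}(\mathcal T)\big)=(\text{\bsifamily{s}}\centerdot\text{\bsifamily{t}})^{\leftarrow}\big(i_{\mathcal M}(\text{\bsifamily{s}}^{\rightarrow}(\mathcal T))\big)\subseteq i_{\mathcal N}\big((\text{\bsifamily{s}}\centerdot\text{\bsifamily{t}})^{\leftarrow}(\text{\bsifamily{s}}^{\rightarrow}(\mathcal T))\big)=i_{\mathcal N}\big(\text{\bsifamily{t}}^{\leftarrow}(\text{\bsifamily{s}}^{\leftarrow}(\text{\bsifamily{s}}^{\rightarrow}(\mathcal T)))\big),
\]
applying the $I$-continuity of $\text{\bsifamily{s}}\centerdot\text{\bsifamily{t}}$ to the subobject $\mathcal U=\text{\bsifamily{s}}^{\rightarrow}(\mathcal T)$ in the middle step. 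Finally, monotonicity of $\text{\bsifamily{t}}^{\leftarrow}$ and of $i_{\mathcal N}$ (property ($I_2$)) together with the counit inclusion contract the right-hand side to $i_{\mathcal N}\big(\text{\bsifamily{t}}^{\leftarrow}(\mathcal T)\big)$, yielding exactly $\text{\bsifamily{t}}^{\leftarrow}(i_{\mathcal L_{\text{\bsifamily{s}}}}(\mathcal T))\subseteq i_{\mathcal N}(\text{\bsifamily{t}}^{\leftarrow}(\mathcal T))$, i.e.\ $\text{\bsifamily{t}}$ is $I$-continuous.

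There is no deep obstacle here; the argument is the interior-operator analogue of Proposition~\ref{c-unique}. The one point to watch is the direction of the adjunction between $\text{\bsifamily{s}}^{\rightarrow}$ and $\text{\bsifamily{s}}^{\leftarrow}$: in contrast to the closure case, the decisive move is the counit $\text{\bsifamily{s}}^{\leftarrow}\text{\bsifamily{s}}^{\rightarrow}\subseteq\mathrm{id}$ rather than the unit, and one must invoke monotonicity of both $\text{\bsifamily{t}}^{\leftarrow}$ and $i_{\mathcal N}$ to legitimize the final contraction. Everything else is routine bookkeeping with the contravariant functoriality of the inverse image.
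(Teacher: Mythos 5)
Your proposal is correct and, for the substantive direction, is exactly the paper's argument: rewrite $\text{\bsifamily{t}}^{\leftarrow}\big(i_{\mathcal L_{\text{\bsifamily{s}}}}(\mathcal T)\big)$ as $(\text{\bsifamily{s}}\centerdot\text{\bsifamily{t}})^{\leftarrow}\big(i_{\mathcal M}(\text{\bsifamily{s}}^{\rightarrow}(\mathcal T))\big)$, apply the $I$-continuity of $\text{\bsifamily{s}}\centerdot\text{\bsifamily{t}}$ to $\text{\bsifamily{s}}^{\rightarrow}(\mathcal T)$, and contract using $\text{\bsifamily{s}}^{\leftarrow}\text{\bsifamily{s}}^{\rightarrow}(\mathcal T)\subseteq\mathcal T$ together with monotonicity of $\text{\bsifamily{t}}^{\leftarrow}$ and $i_{\mathcal N}$. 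The only difference is that you also make the ``only if'' direction explicit via closure of $I$-continuous morphisms under composition, a step the paper leaves tacit since its written proof covers only the converse implication.
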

\begin{proof}
Suppose that $\text{\large \bsifamily{s}}\centerdot \text{\large \bsifamily{t}}$ is $I$-continuous, i. e.
$$(\text{\large \bsifamily{s}}\centerdot \text{\large \bsifamily{t}})^{\leftarrow}\big(i_{\text{\tiny{$\mathcal M$}}}(\mathcal T)\big)\subseteq i_{\text{\tiny{$\mathcal N$}}}\big( (\text{\large \bsifamily{s}}\centerdot \text{\large \bsifamily{t}})^{\leftarrow}(\mathcal T) \big)$$
 for all$\mathcal T\in {\mathfrak S}_\text{{\bsifamily{cc}}}(\mathcal M)$. Then, for all $\mathcal R\in {\mathfrak S}_\text{{\bsifamily{cc}}}(\mathcal L)$, we have
\begin{align*}
\text{\large \bsifamily{t}}^{\leftarrow}\big(i_{\text{\tiny{${\mathcal L}_{ \text{\tiny \bsifamily{s}}}$}}}(\mathcal R)\big)&=\text{\large \bsifamily{t}}^{\leftarrow}\big(\text{\large \bsifamily{s}}^{\leftarrow} \centerdot i_{\text{\tiny{$\mathcal M$}}}\centerdot \text{\large \bsifamily{s}}^{\rightarrow}(\mathcal R)\big)=(\text{\large \bsifamily{s}}\centerdot \text{\large \bsifamily{t}})^{\leftarrow}\big( i_{\text{\tiny{$\mathcal M$}}}(\text{\large \bsifamily{s}}^{\rightarrow}(\mathcal R)) \big)\\
 &\subseteq i_{\text{\tiny{$\mathcal N$}}}\big((\text{\large \bsifamily{s}}\centerdot \text{\large \bsifamily{t}})^{\leftarrow}\big( \text{\large \bsifamily{s}}^{\rightarrow}(\mathcal R)\big) \big)=i_{\text{\tiny{$\mathcal N$}}}\big( \text{\large \bsifamily{t}}^{\leftarrow}\centerdot \text{\large \bsifamily{s}}^{\leftarrow}\centerdot  \text{\large \bsifamily{s}}^{\rightarrow} (\mathcal R)\big)\\
 &\subseteq i_{\text{\tiny{$\mathcal N$}}}\big(\text{\large \bsifamily{t}}^{\leftarrow} (S)\big),\\
\end{align*}
i.e.  $\text{\large \bsifamily{t}} $  is $I$-continuous.
\end{proof}
As a consequence of corollary(\ref{i-complete}), proposition(\ref{inic-cont}) and proposition (\ref{i-unique}) (cf. \cite{AHS}), we obtain 
 \begin{theorem}
The forgetful functor $U:\mathbf{I}\text{-}\mathbf CCov\rightarrow \mathbf CCov$ is topological, i.e. the concrete category $\big(\mathbf{I}\text{-}\mathbf CCov,\ U\big)$ is topological.
 \end{theorem}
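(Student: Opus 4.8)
The plan is to verify the standard criterion (cf. \cite{AHS}) that a faithful concrete functor is topological precisely when every $U$-structured source admits a unique $U$-initial lift. Faithfulness of $U$ is immediate, since a morphism of $\mathbf{I}\text{-}\mathbf CCov$ is nothing but a $\mathbf CCov$-morphism that happens to be $I$-continuous; thus $U$ merely forgets the interior operators and acts injectively on each hom-class. Consequently the entire burden is to construct, for an arbitrary structured source, an initial interior operator on its domain and to confirm its universal property.

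First I would fix a $U$-structured source $\big(s_\lambda:\mathcal L\rightarrow U(\mathcal M_\lambda,i_{\mathcal M_\lambda})\big)_{\lambda\in\Lambda}$ in $\mathbf CCov$, where $\mathcal L$ is a fixed object of $\mathbf CCov$ and each $(\mathcal M_\lambda,i_{\mathcal M_\lambda})$ is an object of $\mathbf{I}\text{-}\mathbf CCov$. For every index $\lambda$, Proposition \ref{inic-cont} supplies the single-morphism initial interior operator $i_{\mathcal L_{s_\lambda}}=s_\lambda^{\leftarrow}\centerdot i_{\mathcal M_\lambda}\centerdot s_\lambda^{\rightarrow}$ on $\mathcal L$, the coarsest one making $s_\lambda$ $I$-continuous. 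Since $Int(\mathcal L)$ is a complete lattice (Corollary \ref{i-complete}), I would set
\[
\widehat{i_{\mathcal L}}:=\bigvee_{\lambda\in\Lambda} i_{\mathcal L_{s_\lambda}},
\]
which is again an interior operator on $\mathcal L$ by the earlier result that a join of interior operators satisfies $(I_1)$--$(I_3)$. Because $\widehat{i_{\mathcal L}}\geqslant i_{\mathcal L_{s_\lambda}}$ for each $\lambda$, and $I$-continuity is preserved under enlarging the interior operator on the domain (via monotonicity $(I_2)$), every $s_\lambda$ is $I$-continuous for $\widehat{i_{\mathcal L}}$; this exhibits $(\mathcal L,\widehat{i_{\mathcal L}})$ as a lift of the source.

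Next I would establish initiality. By construction $\widehat{i_{\mathcal L}}$ is the least interior operator dominating every $i_{\mathcal L_{s_\lambda}}$, hence the coarsest structure on $\mathcal L$ turning all $s_\lambda$ into $I$-continuous morphisms. Given any test object $(\mathcal N,i_{\mathcal N})$ and any $\mathbf CCov$-morphism $t:\mathcal N\rightarrow\mathcal L$, Proposition \ref{i-unique} applied to each $s_\lambda$ shows that $t$ is $I$-continuous into $(\mathcal L,\widehat{i_{\mathcal L}})$ if and only if every composite $s_\lambda\centerdot t$ is $I$-continuous, which is exactly the initiality condition for the source. Uniqueness of the lift is forced by antisymmetry of the fiber order on $Int(\mathcal L)$, two initial lifts being mutually $\leqslant$ one another. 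Assembling these observations with Corollary \ref{i-complete}, Proposition \ref{inic-cont} and Proposition \ref{i-unique} in the manner of \cite{AHS} yields that $U$ is topological.

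The main obstacle will be pinning down the lattice direction: I must confirm that it is the \emph{join} (not the meet) of the single-morphism operators $i_{\mathcal L_{s_\lambda}}$ that furnishes the initial lift, and that this join simultaneously obeys $(I_1)$--$(I_3)$ and renders each $s_\lambda$ continuous. The delicacy is that $I$-continuity is not symmetric in the fiber order: it is preserved by enlarging the domain operator but not by shrinking it. Hence I have to check that the class of interior operators making a fixed $s_\lambda$ continuous is exactly the principal up-set of $i_{\mathcal L_{s_\lambda}}$, so that the smallest operator making \emph{all} the $s_\lambda$ continuous is precisely their join. Once that order-theoretic bookkeeping is settled, the three cited results combine formally, exactly as in the closure case.
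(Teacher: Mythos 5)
Your proposal is correct and takes essentially the same approach as the paper, whose entire proof consists of citing Corollary \ref{i-complete}, Proposition \ref{inic-cont} and Proposition \ref{i-unique} together with the criterion of \cite{AHS}; your text simply makes explicit how these combine, namely by forming the join in the complete lattice $Int(\mathcal L)$ of the single-morphism initial operators $i_{\mathcal L_{s_\lambda}}$ and checking the lifting and initiality conditions. Note only that the universal property you need already follows from Proposition \ref{i-unique} together with the pointwise description of joins (and the up-closure of continuity under enlarging the domain operator), so the identification of the continuity-making operators with the exact principal up-set of $i_{\mathcal L_{s_\lambda}}$, which you single out as the main obstacle, is not actually required for the argument.
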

\subsection{Open subobjects}
 In this section we introduce a  notion of open subobjects different from the one allowed in (\cite{CS}).

\begin{defi}
 An subobject $\mathcal T$ of a convergent cover $\mathcal S$ is called
  $\mathbf{\text{\rsfs{I}}}$-open (in $\mathcal S$) if $  i_{\text{\tiny{$\mathcal S$}}}(\mathcal T)=\mathcal T $;
  \end{defi}
 It is easy to verify thst for the Kuratowski interior operator $i$ of $\mathbf{Top}$, $i$-open for a subspace inclusion $M \rightarrowtail X$ means open in the usual topological sense.
 
 The $\mathbf{\text{\rsfs{I}}}$-continuity condition (\ref{c-conti}) implies that $\mathbf{\text{\rsfs{I}}}$-openness is preserve by inverse images:
 
 \begin{prop}
Let  $\text{\Large \bsifamily{s}}:\mathcal L\rightarrow \mathcal M $ be a morphism in $\mathbf  CCov$.   If $\mathcal V$ is $\mathbf{\text{\rsfs{I}}}$-open in $\mathcal M$, then $\text{\Large \bsifamily{s}}^{\leftarrow}(\mathcal V)$ is $\mathbf{\text{\rsfs{I}}}$-open  in $\mathcal L$,
 \end{prop}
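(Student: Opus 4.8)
The plan is to mirror the argument already used for $\mathbf{\text{\rsfs{C}}}$-closedness, exploiting the formal duality between the contraction axiom $(I_1)$ for interior operators and the extension axiom $(C_1)$ for closure operators. Throughout I would treat $\text{\Large \bsifamily{s}}$ as an $I$-continuous morphism, as the surrounding discussion (the $\mathbf{\text{\rsfs{I}}}$-continuity condition (\ref{i-conti})) makes clear; this is the only structural input beyond the axioms of an interior operator, and the goal is simply to verify the two opposite inclusions that together give $i_{\text{\tiny{$\mathcal L$}}}\big(\text{\Large \bsifamily{s}}^{\leftarrow}(\mathcal V)\big)=\text{\Large \bsifamily{s}}^{\leftarrow}(\mathcal V)$.

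First I would record one inclusion for free. By the contraction axiom $(I_1)$ applied on $\mathcal L$ to the subobject $\text{\Large \bsifamily{s}}^{\leftarrow}(\mathcal V)$, we obtain
\[
i_{\text{\tiny{$\mathcal L$}}}\big(\text{\Large \bsifamily{s}}^{\leftarrow}(\mathcal V)\big)\subseteq \text{\Large \bsifamily{s}}^{\leftarrow}(\mathcal V),
\]
which holds regardless of any hypothesis on $\mathcal V$.

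For the reverse inclusion I would combine the openness hypothesis with $I$-continuity. Since $\mathcal V$ is $\mathbf{\text{\rsfs{I}}}$-open in $\mathcal M$ we have $i_{\text{\tiny{$\mathcal M$}}}(\mathcal V)=\mathcal V$, so that $\text{\Large \bsifamily{s}}^{\leftarrow}(\mathcal V)=\text{\Large \bsifamily{s}}^{\leftarrow}\big(i_{\text{\tiny{$\mathcal M$}}}(\mathcal V)\big)$. Feeding this into the $I$-continuity condition (\ref{i-conti}) for $\text{\Large \bsifamily{s}}$, taken at $\mathcal T=\mathcal V$, would give
\[
\text{\Large \bsifamily{s}}^{\leftarrow}(\mathcal V)=\text{\Large \bsifamily{s}}^{\leftarrow}\big(i_{\text{\tiny{$\mathcal M$}}}(\mathcal V)\big)\subseteq i_{\text{\tiny{$\mathcal L$}}}\big(\text{\Large \bsifamily{s}}^{\leftarrow}(\mathcal V)\big).
\]
Putting the two inclusions together yields the desired equality $i_{\text{\tiny{$\mathcal L$}}}\big(\text{\Large \bsifamily{s}}^{\leftarrow}(\mathcal V)\big)=\text{\Large \bsifamily{s}}^{\leftarrow}(\mathcal V)$, which is precisely the assertion that $\text{\Large \bsifamily{s}}^{\leftarrow}(\mathcal V)$ is $\mathbf{\text{\rsfs{I}}}$-open in $\mathcal L$.

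There is no genuine obstacle here; the proof is a short bookkeeping of inclusions, entirely dual to the one given for closed subobjects. The only point that merits care is the standing assumption that $\text{\Large \bsifamily{s}}$ is $I$-continuous rather than merely a $\mathbf{CCov}$-morphism: without continuity one cannot pull the interior operator out through $\text{\Large \bsifamily{s}}^{\leftarrow}$, and the reverse inclusion would fail. I would therefore flag this assumption explicitly, exactly as in the dual statement for closed subobjects, so that the hypothesis matches the continuity condition (\ref{i-conti}) actually invoked in the argument.
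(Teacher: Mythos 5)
Your proof is correct and is essentially the paper's own argument: the key step in both is to apply the $I$-continuity condition (\ref{i-conti}) to $\mathcal V$ after rewriting $\text{\Large \bsifamily{s}}^{\leftarrow}(\mathcal V)$ as $\text{\Large \bsifamily{s}}^{\leftarrow}\big(i_{\text{\tiny{$\mathcal M$}}}(\mathcal V)\big)$, giving $\text{\Large \bsifamily{s}}^{\leftarrow}(\mathcal V)\subseteq i_{\text{\tiny{$\mathcal L$}}}\big(\text{\Large \bsifamily{s}}^{\leftarrow}(\mathcal V)\big)$. You are merely more explicit than the paper on two minor points, both to your credit: you spell out the reverse inclusion coming from the contraction axiom $(I_1)$ (which the paper leaves implicit), and you flag that the hypothesis must really be $I$-continuity of $\text{\Large \bsifamily{s}}$, not bare $\mathbf{CCov}$-morphism status, which matches the sentence introducing the proposition in the paper.
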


\begin{proof}

If $  i_{\text{\tiny{$\mathcal M$}}}(\mathcal V)=\mathcal V $ then  
  $
  \text{\Large \bsifamily{s}}^{\leftarrow}\big(\mathcal V)\big) =
 \text{\Large \bsifamily{s}}^{\leftarrow}\big( i_{\text{\tiny{$\mathcal M$}}}(\mathcal V)\big) \subseteq i_{\text{\tiny{$\mathcal L$}}}\big(  \text{\Large \bsifamily{s}}^{\leftarrow}(\mathcal V)\big).
$
In other words,  $\text{\Large \bsifamily{s}}^{\leftarrow}(\mathcal V)$ is $\mathbf{\text{\rsfs{I}}}$-open  in $\mathcal L$.
\end{proof}

\subsection{A coreflective subcategory of $\mathbf{\mathfrak{S}_\text{{\bsifamily{cc}}}(\mathcal L)}$ }
For every $\mathcal L\in \mathbf{CCov}$, let $\mathbf{\mathfrak{S}_\text{{\bsifamily{cc}}}(\mathcal L)}^{\mathfrak O}$ denote the  collection of $\mathbf{\text{\rsfs{I}}}$-open subobjects of $\mathcal L$

Since for every  $\mathcal L\in \mathbf{CCov}$, the inclusion $j: \mathbf{\mathfrak{S}_\text{{\bsifamily{cc}}}(\mathcal L)}^{\mathfrak O}\hookrightarrow \mathbf{\mathfrak{S}_\text{{\bsifamily{cc}}}(\mathcal L)}$ preserves all joins, it has a right Galois adjoint\footnote{We use The Galois Adjunction Theorem in CZF; see \cite{PA}}
\begin{equation}\label{i-clos}
\mathfrak{K}_{_\text{\tiny{$\mathcal L$}}}:\mathbf{\mathfrak{S}_\text{{\bsifamily{cc}}}(\mathcal L)}\rightarrow \ \mathbf{\mathfrak{S}_\text{{\bsifamily{cc}}}(\mathcal L)}^{\mathfrak O}\,\ \text{defined by}\,\ 
\mathfrak{K}_{_\text{\tiny{L}}}(\mathcal T)=\bigcup\{\mathcal V\in\mathbf{\mathfrak{S}_\text{{\bsifamily{cc}}}^{\mathfrak O}(\mathcal L)}\mid  j(\mathcal V) \subseteq \mathcal T\}.
\end{equation}
\begin{prop}
 The family  $\mathfrak K=(\mathfrak{K}_{_\text{\tiny{$\mathcal L$}}})_{\text{$\mathcal L\in \mathbf{CCov}$}}$ of maps  (\ref{i-clos})  is another interior operator of the category  $\mathbf{CCov}$.
\end{prop}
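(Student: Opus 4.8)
The plan is to exploit the exact order-theoretic duality between this statement and the earlier Proposition establishing that $\mathfrak{R}$ is a closure operator. Whereas there the inclusion $i$ of $\mathbf{\mathfrak{S}_\text{{\bsifamily{cc}}}(\mathcal L)}^{\mathfrak C}$ preserved all meets and the left Galois adjoint $\mathfrak{R}_{\mathcal L}$ was built from an intersection, here the inclusion $j$ of $\mathbf{\mathfrak{S}_\text{{\bsifamily{cc}}}(\mathcal L)}^{\mathfrak O}$ preserves all joins and $\mathfrak{K}_{\mathcal L}$ is built from a union. Concretely I would verify the three interior-operator axioms $(I_1)$--$(I_3)$ directly from the defining formula $\mathfrak{K}_{\mathcal L}(\mathcal T)=\bigcup\{\mathcal V\in\mathbf{\mathfrak{S}_\text{{\bsifamily{cc}}}^{\mathfrak O}(\mathcal L)}\mid j(\mathcal V)\subseteq\mathcal T\}$, treating $j$ as an inclusion so that $j(\mathcal V)=\mathcal V$ throughout.

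For $(I_1)$ (Contraction), every $\mathcal V$ appearing in the indexing family satisfies $\mathcal V\subseteq\mathcal T$, and since $\mathbf{\mathfrak{S}_\text{{\bsifamily{cc}}}(\mathcal L)}$ is a coframe the join of a family of subobjects each below $\mathcal T$ is again below $\mathcal T$; hence $\mathfrak{K}_{\mathcal L}(\mathcal T)\subseteq\mathcal T$. For $(I_2)$ (Monotonicity), if $\mathcal R\subseteq\mathcal T$ then every open $\mathcal V$ with $\mathcal V\subseteq\mathcal R$ also satisfies $\mathcal V\subseteq\mathcal T$, so the indexing family for $\mathfrak{K}_{\mathcal L}(\mathcal R)$ is contained in that for $\mathfrak{K}_{\mathcal L}(\mathcal T)$, and enlarging the family enlarges the join. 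For $(I_3)$ (Upper bound), I would first note that $\mathcal L$ itself is $\mathbf{\text{\rsfs{I}}}$-open, because axiom $(I_3)$ for the ambient interior operator $i$ gives $i_{\mathcal L}(\mathcal L)=\mathcal L$; thus $\mathcal L$ belongs to the family defining $\mathfrak{K}_{\mathcal L}(\mathcal L)$ and, together with $(I_1)$, yields $\mathfrak{K}_{\mathcal L}(\mathcal L)=\mathcal L$.

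I would close by recording idempotency, the dual of the remark made for $\mathfrak{R}$: since $j$ preserves joins, the collection of $\mathbf{\text{\rsfs{I}}}$-open subobjects is closed under joins, so $\mathfrak{K}_{\mathcal L}(\mathcal T)$ is itself open; applying $\mathfrak{K}_{\mathcal L}$ to an open subobject returns it unchanged, whence $\mathfrak{K}_{\mathcal L}\big(\mathfrak{K}_{\mathcal L}(\mathcal T)\big)=\mathfrak{K}_{\mathcal L}(\mathcal T)$. The only point requiring genuine care---the main obstacle---is the well-definedness of the codomain: I must confirm that $\mathfrak{K}_{\mathcal L}(\mathcal T)$ actually lands in $\mathbf{\mathfrak{S}_\text{{\bsifamily{cc}}}(\mathcal L)}^{\mathfrak O}$ rather than merely in $\mathbf{\mathfrak{S}_\text{{\bsifamily{cc}}}(\mathcal L)}$, and this is exactly where the join-preservation of $j$ (equivalently, the existence of the right Galois adjoint asserted via the Galois Adjunction Theorem in CZF) is used. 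Everything else is the formal dual of the $\mathfrak{R}$ computation and requires no new ideas.
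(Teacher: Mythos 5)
Your proposal is correct and takes essentially the same route as the paper: verify $(I_1)$--$(I_3)$ directly from the defining union formula (contraction because every member of the indexing family lies below $\mathcal T$, monotonicity because the indexing family only grows, upper bound because $\mathcal L$ is itself open) and then record idempotency of $\mathfrak{K}_{\mathcal L}$. If anything, you are more careful than the paper, which justifies $(I_1)$ with a misstated quantifier (``for some $\mathcal V$''), dismisses $(I_3)$ as clear rather than invoking $i_{\mathcal L}(\mathcal L)=\mathcal L$, and leaves implicit the well-definedness point you rightly flag, namely that $\mathfrak{K}_{\mathcal L}(\mathcal T)$ is again open because opens are closed under joins.
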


\begin{proof}
Let $\mathcal L$ be an object of $ \mathbf{CCov}$. Then 
 \begin{itemize}
 \item[($I_1$)] $ \mathfrak{K}_{\text{\tiny{$\mathcal L$}}}(\mathcal T)\subseteq \mathcal T $ for all $\mathcal T \in \mathbf{\mathfrak{S}_\text{{\bsifamily{cc}}}(\mathcal L)}$, because $\mathcal V \subseteq \mathcal T$ for some $\mathcal V\in \mathbf{\mathfrak{S}_\text{{\bsifamily{cc}}}(\mathcal L)}^{\mathfrak O}$;
 \item[($I_2$)]  If $\mathcal S\subseteq \mathcal T$ in $\mathbf{\mathfrak{S}_\text{{\bsifamily{cc}}}(\mathcal L)}$, then 
 \begin{align*}
\mathfrak{K}_{_\text{\tiny{$\mathcal L$}}}(\mathcal S)&=\bigcup\{\mathcal V\in \mathbf{\mathfrak{S}_\text{{\bsifamily{cc}}}(\mathcal L)}\mid  j(\mathcal V)\subseteq S\}\\ &\subseteq \bigcup\{\mathcal V\in \mathbf{\mathfrak{S}_\text{{\bsifamily{cc}}}(\mathcal L)}\mid j(\mathcal V)\subseteq \mathcal T\} \\ &=\mathfrak{K}_{\text{\tiny{$\mathcal L$}}}(\mathcal T);
 \end{align*}
 \item[($I_3$)] Clearly, we have \  $\mathfrak{K}_{_\text{\tiny{$\mathcal L$}}}(\mathcal L)=\mathcal L$.
 \end{itemize}
 
 Additionally, it is interesting to note that 
 \[
 \mathfrak{K}_{\text{\tiny{$\mathcal L$}}}\big( \mathfrak{K}_{_\text{\tiny{$\mathcal L$}}}(\mathcal T)\big)=\bigcup\{\mathcal V\in \mathbf{\mathfrak{S}_\text{{\bsifamily{cc}}}(\mathcal L)}\mid \mathfrak{K}_{\text{\tiny{$\mathcal L$}}}(\mathcal T)\subseteq j(\mathcal V)\} =\mathfrak{R}_{\text{\tiny{$\mathcal L$}}}(\mathcal T);
 \]
in other words, $\mathfrak G$ is an idempotent interior operator of the category  $\mathbf{CCov}$.
\end{proof}
\begin{coro}
$\mathbf{\mathfrak{S}_\text{{\bsifamily{cc}}}(\mathcal L)}^{\mathfrak O}$  is a reflective subcategory of 
$\mathbf{\mathfrak{S}_\text{{\bsifamily{cc}}}(\mathcal L)}$.
\end{coro}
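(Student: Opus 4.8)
The plan is to verify that the inclusion $j:\mathbf{\mathfrak{S}_\text{{\bsifamily{cc}}}(\mathcal L)}^{\mathfrak O}\hookrightarrow \mathbf{\mathfrak{S}_\text{{\bsifamily{cc}}}(\mathcal L)}$ admits a \emph{left} adjoint, since a subcategory is reflective precisely when its inclusion has a left adjoint. Paralleling the treatment of closed subobjects, I would invoke the Galois Adjunction Theorem \cite{PA}: the inclusion $j$ has a left adjoint exactly when it preserves all meets, equivalently when $\mathbf{\mathfrak{S}_\text{{\bsifamily{cc}}}(\mathcal L)}^{\mathfrak O}$ is closed under arbitrary meets taken in the ambient coframe. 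Granting this closure, the reflector is forced to be
\[
\mathfrak{L}_{_\text{\tiny{$\mathcal L$}}}(\mathcal T)=\bigcap\{\mathcal V\in\mathbf{\mathfrak{S}_\text{{\bsifamily{cc}}}(\mathcal L)}^{\mathfrak O}\mid \mathcal T\subseteq j(\mathcal V)\},
\]
the smallest $\mathbf{\text{\rsfs{I}}}$-open subobject containing $\mathcal T$.

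First I would record the reflection universal property: for every $\mathbf{\text{\rsfs{I}}}$-open $\mathcal V$ one has $\mathfrak{L}_{_\text{\tiny{$\mathcal L$}}}(\mathcal T)\subseteq \mathcal V$ iff $\mathcal T\subseteq j(\mathcal V)$. The direction $(\Leftarrow)$ is immediate, since such a $\mathcal V$ belongs to the indexing family and the meet lies below it; the direction $(\Rightarrow)$ uses the unit inequality $\mathcal T\subseteq \mathfrak{L}_{_\text{\tiny{$\mathcal L$}}}(\mathcal T)$, which in turn rests on $\mathcal T$ being a lower bound of the whole family. Monotonicity of $\mathfrak{L}_{_\text{\tiny{$\mathcal L$}}}$ and its idempotency then follow exactly as for the closure reflector $\mathfrak{R}$ of the closed-subobject corollary, so that $\mathfrak{L}_{_\text{\tiny{$\mathcal L$}}}\dashv j$ and the subcategory is reflective.

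The step I expect to be the main obstacle, indeed the crux of the whole argument, is justifying that $\mathfrak{L}_{_\text{\tiny{$\mathcal L$}}}(\mathcal T)$ genuinely lies in $\mathbf{\mathfrak{S}_\text{{\bsifamily{cc}}}(\mathcal L)}^{\mathfrak O}$, i.e.\ that an arbitrary meet of $\mathbf{\text{\rsfs{I}}}$-open subobjects is again $\mathbf{\text{\rsfs{I}}}$-open. This is the formal dual of the closed case and does \emph{not} follow from $(I_1)$--$(I_3)$ alone: contraction and monotonicity give closure of the $\mathbf{\text{\rsfs{I}}}$-open subobjects under arbitrary \emph{joins} (which is exactly what produces the right adjoint $\mathfrak{K}$ of the preceding Proposition, and hence only coreflective structure), whereas the symmetric computation for meets yields merely the trivial inclusion $\mathfrak{K}\big(\bigcap_\alpha \mathcal V_\alpha\big)\subseteq \bigcap_\alpha \mathcal V_\alpha$. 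To secure the reverse inclusion I would have to exploit the extra structure at hand: the coframe identities of $\mathbf{\mathfrak{S}_\text{{\bsifamily{cc}}}(\mathcal L)}$ (arbitrary meets distributing over finite joins) together with the explicit description of $\lhd_{T^{c}}$ on subobjects, showing that these constrain the interior operator enough that $\mathfrak{K}\big(\bigcap_\alpha \mathcal V_\alpha\big)=\bigcap_\alpha \mathcal V_\alpha$ for every family of $\mathbf{\text{\rsfs{I}}}$-open $\mathcal V_\alpha$. Establishing the stated reflectivity therefore hinges entirely on this meet-stability; it is the one point where the argument is not a routine dualization of the closed-subobject corollary, and it is where I would concentrate the effort.
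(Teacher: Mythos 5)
Your diagnosis of the obstruction is exactly right, and it is worth stating plainly that the obstacle you flag is fatal rather than merely difficult: the $I$-open subobjects are in general \emph{not} stable under arbitrary meets, so your candidate reflector $\mathfrak{L}_{_\text{\tiny{$\mathcal L$}}}(\mathcal T)=\bigcap\{\mathcal V\mid \mathcal T\subseteq j(\mathcal V)\}$ need not land in $\mathbf{\mathfrak{S}_\text{{\bsifamily{cc}}}(\mathcal L)}^{\mathfrak O}$ at all. Already for the Kuratowski interior operator on $\mathbf{Top}$ --- the paper's own touchstone for $I$-openness --- an infinite intersection of open sets need not be open ($\bigcap_{n}(-1/n,1/n)=\{0\}$), so no appeal to the coframe identities of $\mathbf{\mathfrak{S}_\text{{\bsifamily{cc}}}(\mathcal L)}$ or to the explicit description of $\lhd_{T^{c}}$ can rescue the meet-stability step for a general interior operator satisfying only $(I_1)$--$(I_3)$. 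Those axioms yield join-stability of the open subobjects, hence only a right adjoint to $j$, exactly as you observe.

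The resolution is that the corollary as printed contains a slip that a blind attempt could not detect: the intended statement is \emph{coreflectivity}, not reflectivity. The enclosing subsection is titled ``A coreflective subcategory of $\mathbf{\mathfrak{S}_\text{{\bsifamily{cc}}}(\mathcal L)}$,'' and the displayed adjunction (\ref{i-clos}) constructs $\mathfrak{K}_{_\text{\tiny{$\mathcal L$}}}$ as the \emph{right} Galois adjoint of $j$ precisely because $j$ preserves all joins; the paper's proof of the corollary is then the one-line observation that this adjunction exists (the proof text even mislabels $\mathfrak{K}_{_\text{\tiny{$\mathcal L$}}}$ a ``left adjoint,'' evidently carried over from the closure section, where $\mathfrak{R}_{_\text{\tiny{$\mathcal L$}}}$ genuinely is a left adjoint and the closed subobjects genuinely form a reflective subcategory). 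So the correct reading is $j\dashv \mathfrak{K}_{_\text{\tiny{$\mathcal L$}}}$, making $\mathbf{\mathfrak{S}_\text{{\bsifamily{cc}}}(\mathcal L)}^{\mathfrak O}$ coreflective; your own remark that join-closure ``produces the right adjoint $\mathfrak{K}$ and hence only coreflective structure'' is precisely the paper's intended content, and the programme of establishing meet-stability should be abandoned rather than pursued.
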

\begin{proof}
As we have already seen,  for every  $\mathcal L\in \mathbf{CCov}$, the interior map\linebreak $\mathfrak{K}_{_\text{\tiny{$\mathcal L$}}}:\mathbf{\mathfrak{S}_\text{{\bsifamily{cc}}}(\mathcal L)}\rightarrow \mathbf{\mathfrak{S}_\text{{\bsifamily{cc}}}(\mathcal L)}^{\mathfrak O}$ is left adjoint of the inclusion morphism \linebreak $j: \mathbf{\mathfrak{S}_\text{{\bsifamily{cc}}}(\mathcal L)}^{\mathfrak O}\hookrightarrow \mathbf{\mathfrak{S}_\text{{\bsifamily{cc}}}(\mathcal L)}$
\end{proof}

\end{document}